\makeatletter\@addtoreset{equation}{section} \makeatother
\newtheorem{theorem}[equation]{Theorem}
\newtheorem{lemma}[equation]{Lemma}
\newtheorem{proposition}[equation]{Proposition}
\theoremstyle{definition}
\newtheorem{example}[equation]{Example}
\newtheorem{definition}[equation]{Definition}
\theoremstyle{remark}
\newtheorem{remark}[equation]{Remark}
\def \P {\mathbb{P}}
\def \C {\mathbb{C}}
\def \Q {\mathbb{Q}}
\def \lct {\mathrm{lct}}
\def \PSL {\mathrm{PSL}}
\def \Aut {\mathrm{Aut}}
\def \ge {\geqslant}
\def \le {\leqslant}
\title{EXTREMAL METRICS ON DEL PEZZO THREEFOLDS}%
\author{Ivan Cheltsov and Constantin Shramov}
\dedicatory{In memory of Vasily Iskovskikh}
\thanks{The first author was supported by
the grants NSF DMS-0701465 and EPSRC EP/E048412/1,
the~second~author was supported by the grants RFFI
No.~08-01-00395-a, N.Sh.-1987.2008.1 and EPSRC EP/E048412/1.}
\begin{document}

\begin{abstract}
We prove the existence of K\"ahler--Einstein metrics on a
nonsingular section of the Grassmannian $\mathrm{Gr}(2,
5)\subset\mathbb{P}^9$ by a linear subspace of codimension $3$,
and the Fermat hypersurface of degree $6$ in
$\mathbb{P}(1,1,1,2,3)$. We also show that a global log canonical
threshold of the Mukai--Umemura variety is equal to $1/2$.
\end{abstract}

\maketitle

\section{Introduction}

Let $X$ be a variety\footnote{All varieties are assumed to be
complex, algebraic, projective and normal.} with at most log
canonical singularities (see  \cite{Ko97}), and let $D$ be an
effective $\mathbb{Q}$-Cartier $\mathbb{Q}$-divisor on the variety
$X$. Then the number
$$
\mathrm{lct}\big(X,D\big)=\mathrm{sup}\left\{\lambda\in\mathbb{Q}\
\Big|\ \text{log pair}\ \big(X, \lambda D\big)\
\text{log canonical}\right\}\in\mathbb{Q}\cup\big\{+\infty\big\}%
$$
is called the log canonical threshold of the divisor $D$ (see
\cite{Sho93}).

Suppose that $X$ is a Fano variety with at most log terminal
singularities (see \cite{IsPr99}).

\begin{definition}
\label{definition:threshold} Global log canonical threshold of the
Fano variety $X$ is the number
$$
\mathrm{lct}\big(X\big)=\mathrm{inf}\left\{\mathrm{lct}\big(X,D\big)\ \Big\vert\ D\ \text{is an effective $\mathbb{Q}$-divisor on $X$ such that}\ D\ \sim_{\mathbb{Q}} -K_{X}\right\}\geqslant 0.%
$$
\end{definition}

Recall that every Fano variety $X$ is rationally connected (see
\cite{Zh06}). Thus, the group $\mathrm{Pic}(X)$ is torsion free.
Hence
$$
\mathrm{lct}\big(X\big)=\mathrm{sup}\left\{\lambda\in\mathbb{Q}\ \left|\ %
\aligned
&\text{the log pair}\ \Big(X, \lambda D\Big)\ \text{is log canonical}\\
&\text{for every effective $\mathbb{Q}$-divisor}\ D\sim_{\mathbb{Q}} -K_{X}\\
\endaligned\right.\right\}.
$$

\begin{example}
\label{example:Cheltsov-Park} Let $X$ be a smooth hypersurface in
$\mathbb{P}^{n}$ of degree $m$, where $2\le m\le n$. Then
$$
\mathrm{lct}\big(X\big)=\frac{1}{n+1-m}
$$
if $m<n$ (see  \cite{Ch01b}). Thus, we have
$\mathrm{lct}(\mathbb{P}^{n})=1/(n+1)$. Suppose that $n=m$. by
\cite{Ch01b}
$$
1\geqslant\mathrm{lct}\big(X\big)\geqslant \frac{n-1}{n}.%
$$
It follows from \cite{Pu04d} and \cite{ChPaWo} that if $X$ is
general, then
$$
\mathrm{lct}\big(X\big)\geqslant\left\{%
\aligned
&1,\ \text{if}\ n\geqslant 6,\\%
&22/25,\ \text{if}\ n=5,\\%
&16/21,\ \text{if}\ n=4,\\%
&3/4,\ \text{if}\ n=3.\\%
\endaligned\right.%
$$
One has $\mathrm{lct}(X)=1-1/n$ if $X$ contains a cone of
dimension $n-2$.
\end{example}

\begin{example}
\label{example:Hwang} Let $X$ be a rational homogeneous space such
that $-K_{X}\sim rD$ and
$$
\mathrm{Pic}\big(X\big)=\mathbb{Z}\big[D\big],
$$
where $D$ is an ample divisor and $r\in\mathbb{Z}_{>0}$. Then
$\mathrm{lct}(X)=1/r$ (see \cite{Hw06b}).
\end{example}

\begin{example}
\label{example:IHES} Let $X$ be a quasismooth hypersurface in
$\mathbb{P}(1,a_{1},a_{2},a_{3},a_{4})$ of
degree~$\sum_{i=1}^{4}a_{i}$ such that $X$ has at most terminal
singularities, where $a_{1}\leqslant a_{2}\leqslant a_{3}\leqslant
a_{4}$. Then
$$
-K_{X}\sim_{\mathbb{Q}}\mathcal{O}_{\mathbb{P}(1,\,a_{1},\,a_{2},\,a_{3},\,a_{4})}\big(1\big)\Big\vert_{X}
$$
and there are $95$ possibilities for the quadruple
$(a_{1},a_{2},a_{3},a_{4})$ (see \cite{IF00}). One has
$$
1\geqslant\mathrm{lct}\big(X\big)\geqslant\left\{%
\aligned
&16/21\ \text{if}\ a_{1}=a_{2}=a_{3}=a_{4}=1,\\%
&7/9\ \text{if}\ (a_{1},a_{2},a_{3},a_{4})=(1,1,1,2),\\%
&4/5\ \text{if}\ (a_{1},a_{2},a_{3},a_{4})=(1,1,2,2),\\%
&6/7\ \text{if}\ (a_{1},a_{2},a_{3},a_{4})=(1,1,2,3),\\%
&1\ \text{in the remaining cases},\\%
\endaligned\right.%
$$
if $X$ is general (see \cite{Ch07a}, \cite{ChPaWo}, \cite{Ch08d}).
\end{example}

\begin{example}
\label{example:del-Pezzos} Let $X$ be smooth  del Pezzo surface.
It follows from \cite{Ch07b} that
$$
\mathrm{lct}\big(X\big)=\left\{%
\aligned
&1\ \text{if}\ K_{X}^{2}=1\ \text{and}\ |-K_{X}|\ \text{contains no cuspidal curves},\\%
&5/6\ \text{if}\ K_{X}^{2}=1\ \text{and}\ |-K_{X}|\ \text{contains a cuspidal curve},\\%
&5/6\ \text{if}\ K_{X}^{2}=2\ \text{and}\ |-K_{X}|\ \text{contains no tacnodal curves},\\%
&3/4\ \text{if}\ K_{X}^{2}=2\ \text{and}\ |-K_{X}|\ \text{contains a tacnodal curve},\\%
&3/4\ \text{if}\ X\ \text{is a cubic in}\ \mathbb{P}^{3}\ \text{with no Eckardt points},\\%
&2/3\ \text{if either}\ X\ \text{is a cubic in}\ \mathbb{P}^{3}\ \text{with an Eckardt point, or  $K_{X}^{2}=4$},\\%
&1/2\ \text{if}\ X\cong\mathbb{P}^{1}\times\mathbb{P}^{1}\ \text{or}\ K_{X}^{2}\in\{5,6\},\\%
&1/3\ \text{in the remaining cases}.\\%
\endaligned\right.%
$$
\end{example}

Let $G\subset\mathrm{Aut}(X)$ be an arbitrary subgroup.

\begin{definition}
\label{definition:G-threshold} Global $G$-invariant log canonical
threshold $\lct(X, G)$ of the Fano variety $X$ is the number
$$
\mathrm{sup}\left\{\epsilon\in\mathbb{Q}\ \left|\ %
\aligned
&\text{the log pair}\ \left(X, \frac{\epsilon}{n}\mathcal{D}\right)\ \text{has log canonical singularities for every}\\
&\text{$G$-invariant linear system}\ \mathcal{D}\subset\big|-nK_{X}\big|\ \text{and every}\ n\in\mathbb{Z}_{>0}\\
\endaligned\right.\right\}.%
$$
\end{definition}

If the Fano variety $X$ is smooth and $G$ is compact, then it
follows from \cite[Appendix~A]{ChSh08} that
$$
\mathrm{lct}\big(X,G\big)=\alpha_{G}\big(X\big),
$$
where $\alpha_{G}(X)$ is the invariant introduced in \cite{Ti87}.
We have
$\mathrm{lct}(X)\leqslant\mathrm{lct}(X,G)\in\mathbb{R}\cup\{+\infty\}$.

\begin{remark}
Suppose that the subgroup $G$ is finite. Then
$$
\mathrm{lct}\big(X,
G\big)=\mathrm{sup}\left\{\lambda\in\mathbb{Q}\ \left|\ \aligned%
&\text{log pair}\ \left(X, \lambda D\right)\ \text{is log canonical for every}\\
&\text{effective $G$-invariant $\mathbb{Q}$-divisor}\ D\sim_{\mathbb{Q}} -K_{X}\\
\endaligned\right.\right\}.%
$$
Indeed, it is enough to show that if $\mathcal{D}\subset |-mK_X|$
is a $G$-invariant linear system such that the log pair $(X,
c\mathcal{D})$ is not log canonical for some $c\in\Q_{\geqslant
0}$, then there is a $G$-invariant effective $\Q$-divisor
$B\sim_{\mathbb{Q}} -mK_X$ such that the log pair $(X, cB)$ is not
log canonical. Put $k=|G|$. Suppose that the log pair $(X,
c\mathcal{D})$ is not log canonical. Let $D\in\mathcal{D}$ be a
general divisor. Then the log pair
$$
\big(X, \frac{c}{k}\sum_{g\in G} g(D)\big)
$$
is not log canonical as well (see the proof of
\cite[Theorem~4.8]{Ko97}), which implies the required assertion.
\end{remark}

\begin{example}
\label{example:Klein} The simple group
$\mathrm{PGL}(2,\mathrm{F}_{7})$ is a group of automorphisms of
the quartic
$$
x^{3}y+y^{3}z+z^{3}x=0\subset\mathbb{P}^{2}\cong\mathrm{Proj}\Big(\mathbb{C}[x,y,z]\Big),
$$
which induces an embedding
$\mathrm{PGL}(2,\mathrm{F}_{7})\subset\mathrm{Aut}(\mathbb{P}^{2})$.
One has $\mathrm{lct}(\mathbb{P}^{2},
\mathrm{PGL}(2,\mathrm{F}_{7}))=4/3$
(see~\cite{PrMar99},~\cite{Ch07b}).
\end{example}

\begin{example}
\label{example:Fermat-cubic} Let $X$ be the cubic surface in
$\mathbb{P}^{3}$ given by the equation
$$
x^{3}+y^{3}+z^{3}+t^{3}=0\subset\mathbb{P}^{3}\cong\mathrm{Proj}\Big(\mathbb{C}\big[x,y,z,t\big]\Big),
$$
and let $G=\mathrm{Aut}(X)\cong
\mathbb{Z}_{3}^{3}\rtimes\mathrm{S}_{4}$. Then $\mathrm{lct}(X,
G)=4$ by~\cite{Ch07b}.
\end{example}

The following result is proved in \cite{Ti87}, \cite{Na90},
\cite{DeKo01} (cf. \cite[Appendix~A]{ChSh08}).

\begin{theorem}
\label{theorem:KE} Suppose that $X$ has at most quotient
singularities, the group $G$ is compact, and the inequality
$$
\mathrm{lct}\big(X,G\big)>\frac{\mathrm{dim}\big(X\big)}{\mathrm{dim}\big(X\big)+1}%
$$
holds. Then $X$ admits an orbifold K\"ahler--Einstein metric.
\end{theorem}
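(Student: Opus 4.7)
The plan is to follow the classical continuity method, working $G$-equivariantly throughout. Choose a smooth $G$-invariant orbifold Kähler form $\omega_{0}\in c_{1}(X)$ and consider Aubin's continuity path
$$
\big(\omega_{0}+i\partial\bar\partial\varphi_{t}\big)^{n}=e^{h_{0}-t\varphi_{t}}\omega_{0}^{n},\qquad t\in[0,1],
$$
where $n=\mathrm{dim}(X)$ and $h_{0}$ is the orbifold Ricci potential normalized so that $\int_{X}(e^{h_{0}}-1)\omega_{0}^{n}=0$. A smooth $G$-invariant solution at $t=1$ provides the sought-after orbifold Kähler--Einstein metric. Let $T\subset[0,1]$ be the set of $t$ admitting such a $G$-invariant solution; averaging over the compact group $G$ preserves solutions, so one loses nothing by restricting to the $G$-invariant setting.

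First I would verify that $T$ is non-empty and open. The value $t=0$ lies in $T$ by Yau's solution of the Calabi conjecture (adapted to orbifolds with quotient singularities, which are locally smooth after a finite cover). Openness follows from the implicit function theorem in suitable orbifold Hölder/Sobolev spaces: the linearization of the Monge--Ampère operator is $-\Delta_{\omega_{t}}-t$, and its spectrum stays clear of $0$ for $t<1$.

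The closedness of $T$ is the heart of the argument. By Aubin--Yau higher order estimates (transplanted to the orbifold category via local finite covers), it suffices to produce a uniform bound $\|\varphi_{t}\|_{C^{0}}\le C$ independent of $t\in T$. This is where the hypothesis $\mathrm{lct}(X,G)>n/(n+1)$ enters, through the identification $\mathrm{lct}(X,G)=\alpha_{G}(X)$ recalled in the text (coming from \cite{DeKo01} together with \cite[Appendix~A]{ChSh08}). Fixing $\alpha$ with $n/(n+1)<\alpha<\mathrm{lct}(X,G)$, the definition of $\alpha_{G}(X)$ furnishes a constant $C_{\alpha}$ such that
$$
\int_{X}e^{-\alpha(\varphi-\sup_{X}\varphi)}\omega_{0}^{n}\le C_{\alpha}
$$
for every $G$-invariant $\omega_{0}$-plurisubharmonic function $\varphi$. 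Applied to $\varphi_{t}$ and combined with the Green's function/Moser iteration argument of Tian \cite{Ti87} (or, equivalently, Nadel's multiplier ideal sheaf argument \cite{Na90}), this exponential integrability yields the desired uniform $C^{0}$ bound because $\alpha>n/(n+1)$.

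The main obstacle is verifying that all the analytic machinery---higher-order a priori estimates, Moser iteration, and the $L^{2}$ theory underlying Demailly--Kollár's characterization of $\alpha_{G}(X)$---carries over to the orbifold setting. This is exactly where the assumption of quotient (rather than arbitrary log terminal) singularities is used: around each point $X$ looks like $U/\Gamma$ for a finite group $\Gamma$ acting on a smooth $U$, so one pulls back, applies the smooth estimates $\Gamma$-equivariantly, and descends. Granted this (standard) verification, the three ingredients---non-emptiness, openness, and the closedness furnished by the $\alpha$-invariant estimate---combine to give $T=[0,1]$ and hence the theorem.
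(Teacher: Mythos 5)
Your proposal is correct and follows exactly the argument the paper relies on: the paper does not prove Theorem~\ref{theorem:KE} itself but attributes it to \cite{Ti87}, \cite{Na90}, \cite{DeKo01}, and your sketch (Aubin's continuity path, openness via the eigenvalue bound, and closedness via the uniform $C^{0}$ estimate extracted from the $\alpha_{G}$-integrability inequality with $\alpha>n/(n+1)$, all transplanted to the orbifold setting through local finite covers) is precisely the standard proof given in those references. No discrepancy to report.
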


\begin{remark}\label{remark:compact-vs-complex}
Let $G\subset\Aut(X)$ be a reductive subgroup, and $G'\subset G$ the
maximal compact subgroup of $G$. Then a restriction to $G'$ of any
irreducible representation of $G$ remains irreducible as a complex
representation of $G'$. This implies that all linear systems on $X$
that are invariant with respect to $G$ are also invariant with
respect to $G'$ (the converse holds by obvious reasons). In
particular, $\lct(X, G)=\lct(X, G')$.
\end{remark}

Theorem~\ref{theorem:KE} has many applications (see
Examples~\ref{example:Cheltsov-Park}, \ref{example:IHES},
\ref{example:Fermat-cubic}).

\begin{example}
\label{example:Fermat-hypersurface} Let $X$ be one of the
following smooth Fano varieties:
\begin{itemize}
\item a Fermat hypersurface in $\mathbb{P}^{n}$ of degree $n/2\leqslant d\leqslant n$ (cf. Example~\ref{example:Fermat-cubic});%
\item a smooth complete intersection of two quadrics in $\mathbb{P}^{5}$ that is given by%
$$
\sum_{i=0}^{5}x_{i}^{2}=\sum_{i=0}^{5}\zeta^{i}x_{i}^{2}=0\subseteq\mathbb{P}^{5}\cong\mathrm{Proj}\Big(\mathbb{C}\big[x_{0},\ldots,x_{5}\big]\Big),
$$
where $\zeta$ is a primitive sixth root of unity;%
\item a hypersurface in $\mathbb{P}(1^{n+1},q)$ of degree $pq$ that is given by the equation%
$$
w^{p}=\sum_{i=0}^{5}x_{i}^{pq}\subseteq\mathbb{P}\big(1^{n+1},q\big)\cong\mathrm{Proj}\Big(\mathbb{C}\big[x_{0},\ldots,x_{n},w\big]\Big),
$$
such that $pq-q\leqslant n$, where
$\mathrm{wt}(x_{0})=\ldots=\mathrm{wt}(x_{n})=1$,
$\mathrm{wt}(w)=q\in\mathbb{Z}_{>0}$ and $p\in \mathbb{Z}_{>0}$.
\end{itemize}
Let $G=\mathrm{Aut}(X)$. Then $G$ is finite, and the inequality
$\mathrm{lct}(X,G)\geqslant 1$ holds (see \cite{Ti87},
\cite{Na90}).
\end{example}

The numbers $\mathrm{lct}(X)$ and  $\mathrm{lct}(X, G)$ also play
an important role in birational geometry. For instance, the
following result holds (see  \cite{Ch07b}).

\begin{theorem}
\label{theorem:G-Pukhlikov} Let $X_{i}$ be a Fano variety, and let
$G_{i}\subset\mathrm{Aut}(X_{i})$ be a finite subgroup such that
\begin{itemize}
\item the variety $X_{i}$ is $G_{i}$-birationally superrigid (see  \cite{ChSh08}),%
\item the inequality $\mathrm{lct}(X_{i},G_{i})\geqslant 1$ holds,%
\end{itemize}
where $i=1,\ldots,r$. Then the following assertions hold:
\begin{itemize}
\item there is no $G_{1}\times\ldots\times G_{r}$-equivariant birational map $\rho\colon X_{1}\times\ldots\times X_{r}\dasharrow\mathbb{P}^{n}$;%
\item every $G_{1}\times\ldots\times G_{r}$-equivariant birational automorphism of $X_{1}\times\ldots\times X_{r}$ is biregular;%
\item for every $G_{1}\times\ldots\times G_{r}$-equivariant
rational dominant map
$$
\rho\colon X_{1}\times\ldots\times X_{r}\dasharrow Y,
$$
whose general fiber is a rationally connected variety, there a
commutative diagram
$$
\xymatrix{
X_{1}\times\ldots\times X_{r}\ar@{->}[d]_{\pi}\ar@{-->}[rrrrd]^{\rho}\\
X_{i_{1}}\times\ldots\times X_{i_{k}}\ar@{-->}[rrrr]_{\xi}&&&&Y}%
$$
where $\xi$ is a birational~map, $\pi$ is a natural projection,
and $\{i_{1},\ldots,i_{k}\}\subseteq\{1,\ldots,r\}$.
\end{itemize}
\end{theorem}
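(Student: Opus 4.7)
The plan is to run an equivariant Noether--Fano--Iskovskikh argument on the product $X=X_{1}\times\cdots\times X_{r}$ under the diagonal action of $G=G_{1}\times\cdots\times G_{r}$. All three conclusions will be read off from one master claim: for any $G$-equivariant dominant rational map $\rho\colon X\dashrightarrow Y$ whose general fibre is rationally connected, there exists a subset $\{i_1,\ldots,i_k\}\subseteq\{1,\ldots,r\}$ such that $\rho$ coincides, up to a $G$-equivariant birational map of the target, with the natural projection $\pi\colon X\to X_{i_1}\times\cdots\times X_{i_k}$. The third bullet is precisely this assertion; the first bullet is the specialisation $Y=\mathbb{P}^{n}$, which forces $k=0$ because no such product can be birational to projective space; and the second bullet is the specialisation $Y=X$ together with $G_i$-superrigidity of each factor, which promotes the resulting birational self-map to a biregular one.

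After resolving the indeterminacy of $\rho$ on a $G$-equivariant model, I would pull back a very ample linear system from a smooth $G$-model of $Y$ to obtain a mobile $G$-invariant linear system $\mathcal{M}$ on $X$. Because the general fibre of $\rho$ is rationally connected it carries no pluricanonical sections, and a standard Noether--Fano bookkeeping gives, for some $n\in\mathbb{Z}_{>0}$,
$$
\tfrac{1}{n}\mathcal{M}\sim_{\mathbb{Q}}-K_{X}-E,
$$
where $E$ is an effective $G$-invariant $\mathbb{Q}$-divisor contracted by $\rho$. The crucial technical claim is that the pair $(X,\tfrac{1}{n}\mathcal{M})$ has canonical singularities. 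Granting this, the equivariant Noether--Fano inequality, applied inductively on $r$, identifies $\mathcal{M}$ with the pullback of an equivariant mobile system from some subproduct, and the $G_i$-birational superrigidity of each factor $X_i$ then forces the identification claimed in the master statement.

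To establish canonicity, suppose a non-canonical centre $Z\subsetneq X$ exists for $(X,\tfrac{1}{n}\mathcal{M})$; after replacing it with the union of its $G$-translates we may assume $Z$ is $G$-invariant. Let $J\subseteq\{1,\ldots,r\}$ consist of those indices $i$ for which $\pi_i(Z)=X_i$, and restrict $\mathcal{M}$ to a general fibre $F_J\cong\prod_{i\in J}X_i$ of the complementary projection: inversion of adjunction and generic smoothness propagate the non-canonical singularity to the pair $(F_J,\tfrac{1}{n}\mathcal{M}|_{F_J})$, and $\mathcal{M}|_{F_J}$ remains invariant under $\prod_{i\in J}G_i$. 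Slicing further by general fibres of all but one projection $\pi_i$ with $i\in J$ produces a $G_i$-invariant mobile linear subsystem of $|-nK_{X_i}|$ whose associated log pair fails to be log canonical, directly contradicting the hypothesis $\lct(X_i,G_i)\ge 1$.

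The principal obstacle lies in this inductive slicing procedure: one must guarantee that restriction to general fibres of iterated projections preserves mobility, retains the relevant group invariance, and actually transports the non-canonical singularity down to the slice. This is a familiar but delicate technical package that combines the Shokurov--Koll\'ar connectedness principle with a careful analysis of $G$-orbits of log canonical centres; the finiteness of the subgroups $G_i$ enters essentially here, keeping the slicing compatible with averaging over group orbits in the sense of the remark following Definition~\ref{definition:G-threshold}. Once this slicing is in place, the three bullet points follow from the master claim as described.
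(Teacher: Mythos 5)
First, a point of comparison: the paper does not actually prove Theorem~\ref{theorem:G-Pukhlikov} --- it is quoted from \cite{Ch07b}, where it is obtained as an equivariant adaptation of Pukhlikov's theorem on Fano direct products \cite{Pu04d}. So there is no in-paper proof to measure you against, only that known argument. Your skeleton is the right one: reduce everything to the ``master claim'' that every $G$-equivariant rationally connected fibration on $X=X_1\times\cdots\times X_r$ is, up to birational modification of the base, a projection onto a subproduct, and prove that claim by an equivariant Noether--Fano argument whose singularity input comes from $\mathrm{lct}(X_i,G_i)\geqslant 1$ and $G_i$-superrigidity. However, the proposal has two genuine gaps.

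The first is the one you flag yourself: the passage from ``$(X,\tfrac{1}{n}\mathcal{M})$ has a non-canonical centre $Z$'' to ``some $G_i$-invariant pair on a single factor $X_i$ fails to be log canonical'' is not a routine application of inversion of adjunction, and it is precisely the content of Pukhlikov's proof. The hypothesis $\mathrm{lct}(X_i,G_i)\geqslant 1$ controls \emph{log canonicity} of pairs $(X_i,\tfrac{1}{n}D)$, whereas the Noether--Fano inequality only produces a failure of \emph{canonicity} of $(X,\tfrac{1}{n}\mathcal{M})$; these differ by a full discrepancy unit, and bridging them requires the multiplicity estimates and the case analysis of centres according to which factors they dominate (together with the supplementary condition, built into the definition of $G_i$-birational superrigidity in \cite{ChSh08}, that mobile $G_i$-invariant systems in $|-nK_{X_i}|$ yield canonical pairs). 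Declaring this ``a familiar but delicate technical package'' leaves the proof without its core, and the averaging remark after Definition~\ref{definition:G-threshold} does not substitute for it.

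The second gap is the deduction of the first bullet. Applied to a birational $\rho\colon X\dasharrow\mathbb{P}^n$ (whose fibres are points), the master claim returns $k=r$ and $\xi=\rho$; it does not ``force $k=0$.'' Moreover your justification --- that ``no such product can be birational to projective space'' --- is false: in Example~\ref{example:Valentiner} the product $\mathbb{P}^2\times\mathbb{P}^2$ is rational, yet the theorem asserts that no $\mathfrak{A}_6\times\mathfrak{A}_6$-equivariant birational map to $\mathbb{P}^4$ exists. The non-rationality being invoked is exactly an instance of the equivariant non-existence statement you are trying to prove, so the reduction is circular; the first bullet needs its own Noether--Fano comparison (pulling back $|\mathcal{O}_{\mathbb{P}^n}(m)|$ and contradicting canonicity of the resulting mobile invariant system), not a specialisation of the master claim.
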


Varieties satisfying all hypotheses of
Theorem~\ref{theorem:G-Pukhlikov} do exist.

\begin{example}
\label{example:Valentiner} The simple group $\mathfrak{A}_{6}$ is
a group of automorphisms of the sextic
$$
10x^{3}y^{3}+9zx^{5}+9zy^{5}+27z^{6}=45x^{2}y^{2}z^{2}+135xyz^{4}\subset\mathbb{P}^{2}\cong\mathrm{Proj}\Big(\mathbb{C}\big[x,y,z\big]\Big),
$$
which induces an embedding
$\mathfrak{A}_{6}\subset\mathrm{Aut}(\mathbb{P}^{2})$. Then
$\mathbb{P}^{2}$ is $\mathfrak{A}_{6}$-birationally~super\-ri\-gid
and the equality $\mathrm{lct}(\mathbb{P}^{2},\mathfrak{A}_{6})=2$
holds (see  \cite{PrMar99}, \cite{Ch07b}). Thus, there is an
induced embedding
$$
\mathfrak{A}_{6}\times
\mathfrak{A}_{6}\cong\Omega\subset\mathrm{Bir}\big(\mathbb{P}^{4}\big)
$$
such that $\Omega$ is not conjugate to any subgroup in
$\mathrm{Aut}(\mathbb{P}^{4})$ by
Theorem~\ref{theorem:G-Pukhlikov}.
\end{example}

\bigskip

Let $V$ be a smooth Fano threefold (see \cite{IsPr99}) such that
$-K_{V}\sim 2H$, where $H$ is an ample Cartier divisor that is not
divisible in $\mathrm{Pic}(V)$.

\begin{remark}
\label{remark:del-Pezzo-threefold} The variety $V$ is called a del
Pezzo variety, since a general element in the linear system $|H|$ is
a smooth del Pezzo surface.
\end{remark}

It is well-known that $V$ is one of the following varieties:
\begin{itemize}
\item $V_1$, i.\,e. a hypersurface in $\mathbb{P}(1,1,1,2,3)$ of degree $6$;%
\item $V_2$, i.\,e. a hypersurface in $\mathbb{P}(1,1,1,1,2)$ of degree $4$;%
\item $V_3$, i.\,e. a cubic surface in $\mathbb{P}^3$;%
\item $V_4$, i.\,e. a complete intersection of two quadrics in $\mathbb{P}^5$;%
\item $V_5$, i.\,e. a section of the Grassmannian
$\mathrm{Gr}(2, 5)\subset\mathbb{P}^9$
by a linear subspace of codimension $3$ (all such sections are isomorphic);%
\item $W$, a divisor in $\mathbb{P}^{2}\times\mathbb{P}^{2}$ of bidegree $(1,1)$;%
\item $V_7$, i.\,e. a blow-up of $\mathbb{P}^3$ at a point;%
\item the product $\mathbb{P}^1\times\mathbb{P}^1\times\mathbb{P}^1$.%
\end{itemize}

\begin{remark}
In \cite{ChSh08} the values of global log canonical thresholds of
smooth del Pezzo threefolds were found:
$$
\mathrm{lct}\big(V\big)=\left\{%
\aligned
&1/4,\ \text{if}\ V\ \text{is a blow-up of $\mathbb{P}^3$ at a point},\\%
&1/2\ \text{in the remaining cases}.\\%
\endaligned\right.%
$$
\end{remark}

\label{remark:KE-del-Pezzo-treefolds} Concerning K\"ahler--Einstein
metrics on $V$, the following is known:
\begin{itemize}
\item $V_7$ does not admit a K\"ahler--Einstein metric (see~\cite{WaZhu04});%
\item $V_4$ admits a  K\"ahler--Einstein metric (see~\cite{ArGhPi06}, cf. Example~\ref{example:Fermat-hypersurface});%
\item $W$ and $\mathbb{P}^1\times\mathbb{P}^1\times\mathbb{P}^1$
admit K\"ahler--Einstein metrics, since their automorphism groups
are reductive and act
on them transitively (see Theorem~\ref{theorem:KE} and Remark~\ref{remark:compact-vs-complex});%
\item there are examples of varieties
$V_2\subset\mathbb{P}(1,1,1,1,2)$ and $V_3\subset\mathbb{P}^{4}$
with large automorphism groups (see
Example~\ref{example:Fermat-hypersurface}) that admit
K\"ahler--Einstein metrics.%
\end{itemize}
The question of existence of K\"ahler--Einstein metrics on the
varieties $V_{1}$ and $V_{5}$ has not been studied in the literature
yet (cf. a remark before \cite[Theorem~3.2]{ArGhPi06}).

The main purpose of this paper is to prove the following
assertions.

\begin{theorem}\label{theorem:V5-lct}
Let $G$ be a maximal compact subgroup in $\mathrm{Aut}(V_5)$. Then
$$
\mathrm{lct}\big(V_5,\ G\big)=\lct(V_5, \Aut(V_5))=5/6.
$$
\end{theorem}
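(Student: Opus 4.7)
By Remark~\ref{remark:compact-vs-complex}, since $\Aut(V_5)\cong\PSL(2,\C)$ is reductive and $G$ is its maximal compact subgroup, we have $\lct(V_5,G)=\lct(V_5,\Aut(V_5))$, so it suffices to prove that the latter equals $5/6$. The geometric input I would rely on throughout is the classical description of the $\PSL(2,\C)$-action on $V_5$: there are exactly three orbits, an open one of dimension $3$, a two-dimensional orbit whose closure is an irreducible hyperplane section $S\in|H|$, and a one-dimensional orbit whose closure is a smooth rational normal sextic curve $C\subset S\subset V_5$, with $C$ lying in the singular locus of $S$. In particular, $\PSL(2,\C)$ has no fixed points on $V_5$, and the invariant subvarieties of $V_5$ form the short list $\{V_5,S,C\}$.

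For the upper bound $\lct(V_5,\Aut(V_5))\le 5/6$, I would exhibit an explicit $\PSL(2,\C)$-invariant effective $\Q$-divisor $D\sim_\Q -K_{V_5}$ for which the pair $(V_5,\tfrac{5}{6}D)$ is log canonical but not Kawamata log terminal. The natural candidate is a suitable $\Q$-linear combination of $2S\in|-K_{V_5}|$ with a $\PSL(2,\C)$-invariant member of $|nH|$ whose zero-scheme contains $C$ scheme-theoretically for appropriate $n$. After an equivariant blow-up of the smooth curve $C$ and, if necessary, of the proper transform of $S$, one reads off the discrepancies and sees that the threshold along the exceptional divisor over $C$ is precisely $5/6$.

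For the lower bound $\lct(V_5,\Aut(V_5))\ge 5/6$, I would argue by contradiction: suppose there exists an invariant effective $\Q$-divisor $D\sim_\Q -K_{V_5}$ such that $(V_5,\lambda D)$ is not log canonical for some $\lambda<5/6$, and let $Z$ be a minimal log canonical centre. Since the pair is invariant and $Z$ may be chosen invariant, $Z\in\{S,C,\mathrm{point}\}$. The point case is ruled out by the absence of $\PSL(2,\C)$-fixed points on $V_5$. If $\dim Z=2$ then $Z=S$; intersecting $D$ with a general line on $V_5$ (recall $V_5$ is covered by lines) gives $\mathrm{mult}_S D\le 2$, and an inversion-of-adjunction argument on the normalisation of $S$ upgrades this to the required bound. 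The main case is $Z=C$: here one needs a sharp bound on $\mathrm{mult}_C D$, which combined with the discrepancy inequality $\lambda\cdot\mathrm{mult}_C D\ge 2$ coming from the blow-up of $C$ produces the inequality $\lambda\ge 5/6$.

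The main obstacle is the case $Z=C$. A straightforward intersection with a general line on $V_5$ through $C$ is too crude, because many lines meet $C$ with multiplicity $\ge 2$. Instead, one has to use the more delicate geometry of $V_5$: either the one-parameter family of conics through a general point of $C$, or an equivariant resolution to the ruled surface $E\to C$ obtained by blowing up $C$, on which the restriction of the proper transform of any invariant $D$ is controlled by a $\PSL(2,\C)$-equivariant linear system on $\mathbb{P}^1$. Obtaining the sharp numerical bound this way is the heart of the proof.
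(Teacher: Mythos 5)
Your reduction to $\Aut(V_5)\cong\PSL_2(\C)$ via Remark~\ref{remark:compact-vs-complex}, the use of the three-orbit stratification, and the observation that any invariant log canonical centre must be the invariant surface, the curve $C$, or a fixed point (which does not exist) all match the paper. However, the proposal has a genuine gap: it never identifies the geometric source of the number $5/6$. The closure of the two-dimensional orbit is the tangent developable $\Delta$ of the rational normal sextic $C\subset V_5\subset\mathbb{P}^6$; it has degree $10$, hence $\Delta\sim 2H\sim -K_{V_5}$, not $\Delta\in|H|$ as you assert (in fact $H^0(V_5,H)$ is an irreducible $7$-dimensional representation of $\mathrm{SL}_2(\C)$, so $|H|$ contains no invariant member whatsoever, and your surface "$S\in|H|$" does not exist). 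Crucially, $\Delta$ has a cuspidal edge along $C$, locally $(x^2=y^3)\times\mathbb{A}^1$, and this cusp is the whole story: the extremal divisor for the upper bound is simply $D=\Delta$ itself, with $\lct(V_5,\Delta)=5/6$ (Lemma~\ref{lemma:fixed-divisor}). Your proposed combination of "$2S$" with an invariant member of $|nH|$ containing $C$, followed by blow-ups whose "discrepancies one reads off", starts from a nonexistent divisor and gives no reason why $5/6$ would emerge.

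The same omission breaks the lower bound in the main case $Z=C$. Non-log-canonicity of $(V_5,\lambda D)$ along $C$ does not imply $\lambda\,\mathrm{mult}_C(D)\geqslant 2$ (only $\geqslant 1$): the pair $(V_5,\lambda\Delta)$ with $\lambda$ slightly above $5/6$ is not log canonical along $C$ although $\lambda\,\mathrm{mult}_C(\Delta)=2\lambda<2$, precisely because of the cusp. So no multiplicity bound along $C$ alone can close the argument, and your target inequality $\mathrm{mult}_C(D)\leqslant 12/5$ has no visible source. What is needed, and what the paper does, is first to split off the fixed invariant part of $D$ -- necessarily a multiple $a\Delta$ with $a\leqslant 1$, handled by the explicit log canonical threshold of the cusp -- and then to treat the residual mobile invariant system via Lemma~\ref{lemma:fixed-or-mobile}. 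For the mobile part one shows that the only possible centre is $C$ and that a general member $D$ of an invariant mobile system in $|nH|$ satisfies $\mathrm{mult}_C(D)\leqslant n/2$, by intersecting $D$ with the tangent line $L$ to $C$ at a general point and applying Lemma~\ref{lemma:multiplicity-2}, which gives $\mathrm{mult}_P(D\cdot L)\geqslant 2\,\mathrm{mult}_C(D)$ at the point of tangency; this contradicts the multiplicity forced by non-log-canonicity. Your mention of conics through points of $C$ or of the ruled exceptional surface over $C$ gestures in this direction, but the tangency estimate and, above all, the fixed/mobile decomposition that makes it applicable are the missing ingredients.
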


\begin{theorem}
\label{theorem:V1} Let $V_{1}$ be a hypersurface in
$\mathbb{P}(1,1,1,2,3)$, given by an equation
$$
w^2=t^3+x^6+y^6+z^6\subset\mathbb{P}\big(1,1,1,2,3\big)\cong\mathrm{Proj}\Big(\mathbb{C}\big[x,y,z,t,w\big]\Big),
$$
where $\mathrm{wt}(x)=\mathrm{wt}(y)=\mathrm{wt}(z)=1$,
$\mathrm{wt}(t)=2$ and $\mathrm{wt}(w)=3$. Then
$\mathrm{lct}(V_{1}, \mathrm{Aut}(V_{1}))\geqslant 1$.
\end{theorem}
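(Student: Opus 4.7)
The plan is to argue by contradiction: suppose there exists a $G$-invariant effective $\mathbb{Q}$-divisor $D \sim_{\mathbb{Q}} -K_{V_1} = 2H$, where $G := \Aut(V_1)$, together with some rational $\lambda < 1$ such that $(V_1, \lambda D)$ is not log canonical. Since $D$ is $G$-invariant, the non-lc locus of $(V_1, \lambda D)$ is $G$-stable, so a minimal non-lc center $Z$ must lie in the $G$-invariant stratification of $V_1$. The key geometric observation is that there are very few $G$-invariant irreducible subvarieties: the two invariant surfaces $S_w = \{w=0\} \sim 3H$ and $S_t = \{t=0\} \sim 2H$; the smooth $G$-invariant curve $\Gamma := S_w \cap S_t$, which is a Fermat sextic of genus $10$; and the unique $G$-fixed point $P_0 = [0:0:0:1:1]$, which is simultaneously the base point of $|H|$ and the common point of every fiber of the elliptic fibration $V_1 \dashrightarrow \mathbb{P}^2$ given by $[x:y:z]$. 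Small nontrivial $G$-orbits of isolated points would also have to be ruled out, for which the three $G$-permuted elliptic curves $\{y=z=0\} \cap V_1$, $\{x=z=0\} \cap V_1$, $\{x=y=0\} \cap V_1$ (of total class $3H^2$) provide a useful intersection bound.

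If a $G$-invariant surface $Z$ were a non-lc center, then its coefficient in $D$ would strictly exceed $1/\lambda > 1$. Since $\mathrm{Pic}(V_1) = \mathbb{Z}[H]$, the residual divisor $D - \tfrac{1}{\lambda} Z$ would have class $\bigl(2 - \tfrac{c}{\lambda}\bigr) H$ with $c \in \{2,3\}$; for $\lambda < 1$ this is not pseudoeffective, contradicting effectivity of the residual. If the minimal non-lc center is the curve $\Gamma$, then the ordinary blow-up along $\Gamma$ forces $\mathrm{mult}_\Gamma D > 2/\lambda > 2$. Taking a general surface $S \in |H|$, which is a smooth del Pezzo surface of degree $1$, one has $D|_S \sim -2K_S$, and $\mathrm{mult}_P(D|_S) > 2$ at each of the $\Gamma \cdot H = 6$ points $P \in \Gamma \cap S$. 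For each such $P$, the unique element $E_P \in |-K_S|$ through $P$ satisfies $E_P \cdot D|_S = 2$, and the local intersection inequality gives $\mathrm{mult}_P(D|_S) \le 2$ provided $E_P$ is not a component of $D|_S$, which can be ensured by letting $S$ vary in the pencil $|H|$. This yields the desired contradiction.

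The hardest case is when the minimal non-lc center is the fixed point $P_0$. Because every fiber $F$ of the elliptic fibration passes through $P_0$ with $D \cdot F = D \cdot H^2 = 2$, the local intersection inequality gives $m := \mathrm{mult}_{P_0} D \le 2$. Let $\pi : \tilde V_1 \to V_1$ be the blow-up of $V_1$ at $P_0$ with exceptional divisor $E \cong \mathbb{P}^2$. From $\pi^*(K_{V_1} + \lambda D) = K_{\tilde V_1} + \lambda \tilde D + (\lambda m - 2) E$ and inversion of adjunction for the plt boundary $E$, the problem reduces to showing that the pair $(E, \lambda \tilde D|_E)$ is Kawamata log terminal for every $\lambda < 1$. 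Now $\tilde D|_E$ is a $G$-invariant effective $\mathbb{Q}$-divisor on $\mathbb{P}^2 = \mathbb{P}(T_{P_0} V_1)$ of degree $m \le 2$, and the action of $G$ on $E$ factors through the $216$-element subgroup $(\mu_6)^2 \rtimes S_3 \subset \mathrm{PGL}_3$, namely the automorphism group of the Fermat plane sextic. The main technical obstacle is the classification showing that every such $(\mu_6)^2 \rtimes S_3$-invariant effective $\mathbb{Q}$-divisor of degree at most $2$ on $\mathbb{P}^2$ is a rational combination of orbit sums of lines, the smallest orbit being the coordinate triangle $\{x=0\} + \{y=0\} + \{z=0\}$; once this is established, the multiplicity at any point is bounded by the total degree $\le 2$, so $(\mathbb{P}^2, \lambda \tilde D|_E)$ is klt for every $\lambda < 1$, completing the contradiction.
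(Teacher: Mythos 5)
Your overall strategy (argue by contradiction, split according to the dimension of a non-lc centre, and in the crucial case of the base point $O=[0:0:0:1:1]$ blow up and restrict to $E\cong\mathbb{P}^2$) has the same skeleton as the paper's Section~3, and the surface case and the bound $\mathrm{mult}_{O}(D)\leqslant 2$ are handled correctly. However, the curve case contains two genuine errors. First, a non-lc centre need not itself be $G$-invariant, only the union of its $G$-translates is; so besides $\Gamma=\{t=w=0\}\cap V_1$ you must exclude orbits of curves (for instance vertical curves such as the three coordinate fibres, or orbits of non-invariant horizontal curves), and the claim that $\Gamma$, two surfaces and one point exhaust the invariant subvarieties is neither proved nor sufficient. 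Second, non-log-canonicity along a curve $\Gamma$ in a smooth threefold only forces $\mathrm{mult}_{\Gamma}(\lambda D)>1$, i.e.\ $\mathrm{mult}_{\Gamma}(D)>1/\lambda$: nothing guarantees that the exceptional divisor of the ordinary blow-up of $\Gamma$, whose log discrepancy is $2-\lambda\,\mathrm{mult}_{\Gamma}(D)$, is the valuation computing the failure of log canonicity (a surface component of $D$ containing $\Gamma$ with coefficient slightly above $1/\lambda$ already shows the bound $2/\lambda$ is false). With only $\mathrm{mult}_P(D|_S)>1$ at the six points of $\Gamma\cap S$, your intersection with the anticanonical pencil on the degree-one del Pezzo $S\in|H|$ gives $2\geqslant \mathrm{mult}_P(D|_S)$ and no contradiction. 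The paper instead applies Nadel vanishing to $\mathcal{I}(S,\lambda D|_S)\otimes\mathcal{O}_S(-K_S)$, so that the non-klt locus meets $S$ in at most two points, against the at least three points produced by any invariant curve whose image under $\varphi_{|H|}$ has degree at least $3$ (this is where the absence of invariant conics enters); vertical components are excluded because a one-dimensional LCS is a tree of rational curves while the fibres are elliptic. Likewise, for small orbits of isolated points other than $O$ the proposed intersection with the coordinate fibres only yields $\mathrm{mult}\leqslant 2$, which does not contradict $\mathrm{mult}>1$; the Shokurov connectedness theorem, which reduces a zero-dimensional LCS to a single (hence $G$-fixed) point, is the missing ingredient.

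The final step, which you yourself flag as the main technical obstacle, is also not carried out, and the justification sketched for it is insufficient: an effective divisor of degree $\leqslant 2$ on $\mathbb{P}^2$ with multiplicity $\leqslant 2$ at every point need not give a klt pair for $\lambda$ close to $1$ (a double line, or a tacnodal configuration, already fails), and the asserted classification of $(\mu_6\times\mu_6)\rtimes S_3$-invariant divisors of degree $\leqslant 2$ as combinations of orbit sums of lines is incomplete. What is actually needed is only that $E$ contains no $G$-invariant line and no $G$-fixed point. This is how the paper concludes: a one-dimensional $G$-invariant centre $Z\subset E$ is excluded by the degree count $2\geqslant\mathrm{mult}_O(D)=L\cdot\bar D\geqslant \deg(Z)\,\mathrm{mult}_Z(\bar D)>\deg(Z)$ for a general line $L\subset E$, forcing $Z$ to contain an invariant line; and the remaining zero-dimensional case is reduced, by the relative connectedness theorem applied to the blow-up over $V$, to a single $G$-fixed point of $E\cong\mathbb{P}(\langle x,y,z\rangle^{\vee})$, which does not exist since the coordinate points form a single $S_3$-orbit. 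Replacing your unproven classification by this short argument, and repairing the curve and small-orbit cases as above, would bring the proof in line with the paper's.
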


Note that the latter results combined with Theorem~\ref{theorem:KE}
imply the existence of K\"ahler--Einstein metrics on the variety
$V_{5}$ and on the Fermat hypersurface of degree $6$ in
$\mathbb{P}(1,1,1,2,3)$.

\begin{remark}
\label{remark:V1-rigid} Let $V_{1}$ be a smooth hypersurface in
$\mathbb{P}(1,1,1,2,3)$ of degree $6$. Assume that
$\mathrm{lct}(V_{1},G)\geqslant 1$, where $G$ is a subgroup in
$\mathrm{Aut}(V_{1})$. Then
\begin{itemize}
\item the linear system $|H|$ does not contain $G$-invariant surfaces,%
\item the linear system $|H|$ does not contain $G$-invariant pencils (cf. the proof
of~\cite[Theorem~1.2]{PrMar99}),%
\item the variety $V_{1}$ is $G$-birationally superrigid (see \cite{Gr03}, \cite{Gr04}).%
\end{itemize}
\end{remark}

\begin{remark}
The methods we use to prove Theorem~\ref{theorem:V1-lct} are
similar to those of~\cite{Na90}. Nevertheless, some statements
of~\cite{Na90} (say,~\cite[Corollary~4.2]{Na90}, or a standard
method to exclude zero-dimensional components of a subscheme of
log canonical singularities) cannot be directly   applied in our
case, since the group $\Aut(V_1)$ \emph{never} acts on $V_1$
without fixed points.
\end{remark}

The structure of the paper is as follows.
Section~\ref{section:preliminaries} contains some auxiliary
statements. In Section~\ref{section:V1} we prove
Theorem~\ref{theorem:V1}. In Section~\ref{section:V5} we prove
Theorem~\ref{theorem:V5-lct}. The methods of
Section~\ref{section:V5} can be applied without significant
changes to one more interesting Fano threefold, the so-called
Mukai--Umemura variety (see \cite{Do07} and
Remark~\ref{remark:Donaldson}). To complete the picture in
Section~\ref{section:MU} we calculate the global log canonical
threshold of Mukai--Umemura variety without any group action.

We are grateful to A.\,Kuznetsov and E.\,Smirnov for useful
discussions.

\section{Preliminaries}
\label{section:preliminaries}

Let $X$ be a variety with log terminal singularities. Let us
consider a $\mathbb{Q}$-divisor
$$
B_{X}=\sum_{i=1}^{r}a_{i}B_{i},
$$
where $B_{i}$ is a prime Weil divisor on the variety $X$, and
$a_{i}$ is an arbitrary non-negative rational number. Suppose that
$B_{X}$ is a $\mathbb{Q}$-Cartier divisor such that $B_{i}\neq
B_{j}$ for $i\neq j$.

Let $\pi\colon\bar{X}\to X$ be a birational morphism such that
$\bar{X}$ is smooth. Put
$$
B_{\bar{X}}=\sum_{i=1}^{r}a_{i}\bar{B}_{i},
$$
where $\bar{B}_{i}$ is a proper transform of the divisor $B_{i}$
on the variety $\bar{X}$. Then
$$
K_{\bar{X}}+B_{\bar{X}}\sim_{\mathbb{Q}}\pi^{*}\Big(K_{X}+B_{X}\Big)+\sum_{i=1}^{n}c_{i}E_{i},
$$
where $c_{i}\in\mathbb{Q}$, and $E_{i}$ is an exceptional divisor
of the morphism $\pi$. Suppose that
$$
\left(\bigcup_{i=1}^{r}\bar{B}_{i}\right)\bigcup\left(\bigcup_{i=1}^{n}E_{i}\right)
$$
is a divisor with simple normal crossings. Put
$$
B^{\bar{X}}=B_{\bar{X}}-\sum_{i=1}^{n}c_{i}E_{i}.
$$

\begin{definition}
\label{definition:log canonical-singularities} The singularities
of $(X, B_{X})$ are log canonical (resp., log terminal) if
\begin{itemize}
\item the inequality $a_{i}\leqslant 1$ holds (resp., the
inequality $a_{i}<1$
holds),%
\item the inequality $c_{j}\geqslant -1$ holds (resp., the
inequality
$c_{j}>-1$ holds),%
\end{itemize}
for every $i=1,\ldots,r$ and $j=1,\ldots,n$.
\end{definition}

One can show that Definition~\ref{definition:log
canonical-singularities} does not depend on the choice of the
morphism $\pi$. Put
$$
\mathrm{LCS}\Big(X, B_{X}\Big)=\left(\bigcup_{a_{i}\geqslant
 1}B_{i}\right)\bigcup\left(\bigcup_{c_{i}\leqslant -1}\pi\big(E_{i}\big)\right)\subsetneq X,%
$$
and let us call $\mathrm{LCS}(X,B_{X})$ the locus of log canonical
singularities of the log pair $(X, B_{X})$.

\begin{definition}
\label{definition:log canonical-center} A proper irreducible
subvariety $Y\subsetneq X$ is said to be a center of log canonical
singularities of the log pair $(X, B_{X})$ if one of the following
conditions is satisfied:
\begin{itemize}
\item either the inequality $a_{i}\geqslant 1$ holds and $Y=B_{i}$,%
\item or the inequality $c_{i}\leqslant -1$ holds and $Y=\pi(E_{i})$,%
\end{itemize}
for some choice of the birational morphism $\pi\colon\bar{X}\to
X$.
\end{definition}

Let $\mathbb{LCS}(X, B_{X})$ be the set of all centers of log
canonical singularities of $(X, B_{X})$. Then
$$
Y\in\mathbb{LCS}\Big(X, B_{X}\Big)\Longrightarrow Y\subseteq\mathrm{LCS}\Big(X, B_{X}\Big)%
$$
and $\mathbb{LCS}(X, B_{X})=\varnothing\iff\mathrm{LCS}(X,
B_{X})=\varnothing$ $\iff$ the log pair $(X,B_{X})$ is log
terminal.

\begin{remark}
\label{remark:hyperplane-reduction} We can use similar
constructions and notation for any log pair $(X,
\lambda\mathcal{D})$, where $\mathcal{D}$ is a linear system, and
$\lambda$ is a non-negative rational number.
\end{remark}

The set $\mathrm{LCS}(X,B_{X})$ can be naturally equipped with a
scheme structure (see \cite{Na90}, \cite{Sho93}). Put
$$
\mathcal{I}\Big(X, B_{X}\Big)=\pi_{*}\Big(\sum_{i=1}^{n}\lceil
c_{i}\rceil
 E_{i}-\sum_{i=1}^{r}\lfloor a_{i}\rfloor \bar{B}_{i}\Big),%
$$
and let $\mathcal{L}(X, B_{X})$ be a subscheme that corresponds to
the ideal sheaf $\mathcal{I}(X, B_{X})$.

\begin{remark}
The scheme $\mathcal{L}(X, B_{X})$ is usually called the subscheme
of log canonical singularities of the log pair $(X,B_{X})$, and
the ideal sheaf $\mathcal{I}(X, B_{X})$ is usually called the
multiplier ideal sheaf of the log pair $(X,B_{X})$.
\end{remark}

It follows from the construction of the subscheme $\mathcal{L}(X,
B_{X})$ that
$$
\mathrm{Supp}\Big(\mathcal{L}\big(X,
B_{X}\big)\Big)=\mathrm{LCS}\Big(X, B_{X}\Big)\subset X.
$$

The following result is known as the Shokurov vanishing theorem
(see \cite{Sho93}) or the Nadel vanishing theorem
(see~\cite[Theorem~9.4.8]{La04}).

\begin{theorem}
\label{theorem:Shokurov-vanishing} Let $H$ be a nef and big
$\mathbb{Q}$-divisor on $X$ such that
$$
K_{X}+B_{X}+H\sim_{\mathbb{Q}} D
$$
for some Cartier divisor $D$ on the variety $X$. Then for every
$i\geqslant 1$
$$
H^{i}\Big(X,\ \mathcal{I}\big(X, B_{X}\big)\otimes D\Big)=0.
$$
\end{theorem}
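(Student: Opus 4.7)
The plan is to reduce the statement to Kawamata--Viehweg vanishing on a log resolution, which is the classical route to the Nadel--Shokurov vanishing theorem.

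First, I fix a log resolution $\pi\colon\bar{X}\to X$ of the pair $(X, B_X)$ exactly as in the paragraph preceding the statement, so that the union of the proper transforms $\bar{B}_i$ and the exceptional divisors $E_i$ has simple normal crossings. The discrepancy equation recorded there gives
$$
K_{\bar{X}} + B^{\bar{X}} \sim_{\mathbb{Q}} \pi^{*}\big(K_X + B_X\big),
$$
with $B^{\bar{X}} = \sum_{i} a_i \bar{B}_i - \sum_{i} c_i E_i$. Combining this with the hypothesis $K_X + B_X + H \sim_{\mathbb{Q}} D$ yields
$$
\pi^{*} D \sim_{\mathbb{Q}} K_{\bar{X}} + B^{\bar{X}} + \pi^{*} H.
$$

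Next, I introduce the integral (hence Cartier, as $\bar{X}$ is smooth) divisor
$$
N = \pi^{*} D + \sum_{i=1}^{n} \lceil c_i \rceil E_i - \sum_{i=1}^{r} \lfloor a_i \rfloor \bar{B}_i.
$$
By the very definition of $\mathcal{I}(X, B_X)$ and the projection formula one has
$$
\pi_{*} \mathcal{O}_{\bar{X}}(N) \cong \mathcal{O}_X(D) \otimes \mathcal{I}(X, B_X),
$$
and, splitting each coefficient of $B^{\bar{X}}$ into its integer and fractional parts, a direct bookkeeping gives
$$
N - K_{\bar{X}} \sim_{\mathbb{Q}} \pi^{*} H + \big\{B^{\bar{X}}\big\},
$$
where $\{B^{\bar{X}}\}$ denotes the fractional part. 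This is a simple normal crossings divisor with coefficients in $[0,1)$, and $\pi^{*} H$ is nef and big because $\pi$ is birational and $H$ is nef and big on $X$.

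The crucial step is then Kawamata--Viehweg vanishing applied on $\bar{X}$: the difference $N - K_{\bar{X}}$ is the sum of a nef and big $\mathbb{Q}$-divisor and a simple normal crossings boundary with coefficients strictly less than one, so
$$
H^{i}\big(\bar{X},\ \mathcal{O}_{\bar{X}}(N)\big) = 0 \quad \text{for all } i \ge 1.
$$
The relative version of the same theorem yields $R^{j} \pi_{*} \mathcal{O}_{\bar{X}}(N) = 0$ for $j \ge 1$, so the Leray spectral sequence degenerates to give
$$
H^{i}\big(X,\ \mathcal{I}(X, B_X) \otimes \mathcal{O}_X(D)\big) \cong H^{i}\big(\bar{X},\ \mathcal{O}_{\bar{X}}(N)\big) = 0
$$
for $i \ge 1$, as required. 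The only mildly delicate point is that $\mathcal{I}(X, B_X)$ and the above vanishing are independent of the particular log resolution chosen; this is classical and reduces to comparing any two resolutions through a common third one that dominates both.
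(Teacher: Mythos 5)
The paper offers no proof of this statement --- it is quoted as a known result with references to \cite{Sho93} and \cite[Theorem~9.4.8]{La04} --- and your argument is precisely the standard derivation given in those sources: pull everything back to a log resolution, check that $N-K_{\bar{X}}\sim_{\mathbb{Q}}\pi^{*}H+\{B^{\bar{X}}\}$ is the sum of a nef and big $\mathbb{Q}$-divisor and a simple normal crossings boundary with coefficients in $[0,1)$, apply Kawamata--Viehweg vanishing in its absolute and relative forms, and conclude via the projection formula and the Leray spectral sequence. The bookkeeping is correct, so your proof is a valid and complete substitute for the citation.
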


The following result is known as the Shokurov connectedness
theorem.

\begin{theorem}
\label{theorem:connectedness} Suppose that $-(K_{X}+B_{X})$ is nef
and big. Then $\mathrm{LCS}(X, B_{X})$ is connected.
\end{theorem}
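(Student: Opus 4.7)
The plan is to derive the connectedness directly from the Nadel/Shokurov vanishing theorem (Theorem~\ref{theorem:Shokurov-vanishing}) applied with $H:=-(K_X+B_X)$, combined with the defining short exact sequence of the subscheme $\mathcal{L}(X,B_X)$.

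First I would set $H:=-(K_X+B_X)$, which is nef and big by hypothesis, and observe that $K_X+B_X+H\sim_{\mathbb{Q}}0$, so one may take the Cartier divisor $D:=0$ in Theorem~\ref{theorem:Shokurov-vanishing}. This immediately yields
$$
H^i\bigl(X,\mathcal{I}(X,B_X)\bigr)=0 \quad \text{for every } i\geqslant 1.
$$

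Next I would feed this vanishing into the long exact cohomology sequence associated to the tautological short exact sequence
$$
0 \longrightarrow \mathcal{I}(X,B_X) \longrightarrow \mathcal{O}_X \longrightarrow \mathcal{O}_{\mathcal{L}(X,B_X)} \longrightarrow 0.
$$
The vanishing of $H^1(X,\mathcal{I}(X,B_X))$ forces the restriction
$$
H^0(X,\mathcal{O}_X) \longrightarrow H^0\bigl(\mathcal{L}(X,B_X),\mathcal{O}_{\mathcal{L}(X,B_X)}\bigr)
$$
to be surjective; since $X$ is an irreducible normal projective variety, $H^0(X,\mathcal{O}_X)=\mathbb{C}$, whence $h^0\bigl(\mathcal{L}(X,B_X),\mathcal{O}_{\mathcal{L}(X,B_X)}\bigr)\leqslant 1$.

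Finally, if $\mathrm{LCS}(X,B_X)=\mathrm{Supp}(\mathcal{L}(X,B_X))$ had two or more connected components, the structure sheaf of $\mathcal{L}(X,B_X)$ would split as a nontrivial direct sum of sheaves supported on those components, contributing a space of global sections of dimension $\geqslant 2$ and contradicting the above bound. Hence $\mathrm{LCS}(X,B_X)$ is connected (the case $\mathrm{LCS}(X,B_X)=\varnothing$ being vacuous). I do not anticipate a substantive obstacle: the entire argument is essentially a direct application of the vanishing theorem quoted just above to the ideal sheaf sequence of $\mathcal{L}(X,B_X)$. The only point demanding a moment's care is checking that $-(K_X+B_X)$ is a genuine $\mathbb{Q}$-Cartier $\mathbb{Q}$-divisor to which Theorem~\ref{theorem:Shokurov-vanishing} applies, which follows because $K_X$ is $\mathbb{Q}$-Cartier ($X$ having log terminal singularities) and $B_X$ is $\mathbb{Q}$-Cartier by assumption.
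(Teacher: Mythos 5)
Your proposal is correct and follows exactly the paper's own argument: apply Theorem~\ref{theorem:Shokurov-vanishing} with $H=-(K_X+B_X)$ and $D=0$ to kill $H^1(X,\mathcal{I}(X,B_X))$, then read off connectedness of $\mathrm{Supp}(\mathcal{L}(X,B_X))$ from the surjection $\mathbb{C}=H^0(\mathcal{O}_X)\to H^0(\mathcal{O}_{\mathcal{L}(X,B_X)})$. The paper merely states this more tersely; you have filled in the same steps.
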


\begin{proof}
Put $D=0$. Then it follows from
Theorem~\ref{theorem:Shokurov-vanishing} that the sequence
$$
\mathbb{C}=H^{0}\Big(\mathcal{O}_{X}\Big)\longrightarrow
H^{0}\Big(\mathcal{O}_{\mathcal{L}(X,\,
 B_{X})}\Big)\longrightarrow H^{1}\Big(\mathcal{I}\big(X, B_{X}\big)\Big)=0%
$$
is exact. Thus, the locus
$$
\mathrm{LCS}\Big(X,\ B_{X}\Big)=\mathrm{Supp}\Big(\mathcal{L}\big(X,\ B_{X}\big)\Big)%
$$
is connected.
\end{proof}

One can generalize Theorem~\ref{theorem:connectedness} in the
following way (see \cite[Lemma~5.7]{Sho93}).

\begin{theorem}
\label{theorem:log adjunction-connectedness-theorem} Let
$\psi\colon X\to Z$ be a morphism. Then the set
$$
\mathrm{LCS}\Big(\bar{X},\ B^{\bar{X}}\Big)
$$
is connected in a neighborhood of every fiber of the morphism
$\psi\circ\pi\colon X\to Z$ in the case when
\begin{itemize}
\item the morphism $\psi$ is surjective and has connected fibers,%
\item the divisor $-(K_{X}+B_{X})$ is nef and big with respect to $\psi$.%
\end{itemize}
\end{theorem}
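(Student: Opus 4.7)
The plan is to mimic the proof of Theorem~\ref{theorem:connectedness} in the relative setting, replacing absolute cohomology by higher direct images along $\psi\circ\pi$. The target is a surjection $\mathcal{O}_{Z}\twoheadrightarrow(\psi\circ\pi)_{*}\mathcal{O}_{N}$, where $N:=\lfloor B^{\bar X}\rfloor$ is the integral divisor whose set-theoretic support coincides with $\mathrm{LCS}(\bar X, B^{\bar X})$; by a Stein factorization argument this forces $N$, and hence $\mathrm{LCS}(\bar X, B^{\bar X})$, to have connected fibers over $Z$ in a neighborhood of every point, which is precisely the desired statement.

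The key input is the relative analogue of Theorem~\ref{theorem:Shokurov-vanishing}. From the identity
$$
-N-K_{\bar X}\sim_{\mathbb{Q}}\{B^{\bar X}\}-\pi^{*}\bigl(K_{X}+B_{X}\bigr),
$$
together with the assumption that $-(K_{X}+B_{X})$ is $\psi$-nef and $\psi$-big, the right-hand side is $(\psi\circ\pi)$-nef and $(\psi\circ\pi)$-big with simple normal crossing fractional part, so relative Kawamata--Viehweg vanishing gives
$$
R^{i}(\psi\circ\pi)_{*}\,\mathcal{O}_{\bar X}(-N)=0\quad\text{for every }i\geqslant 1.
$$
Pushing the sequence $0\to\mathcal{O}_{\bar X}(-N)\to\mathcal{O}_{\bar X}\to\mathcal{O}_{N}\to 0$ forward along $\psi\circ\pi$, and using the identity $(\psi\circ\pi)_{*}\mathcal{O}_{\bar X}=\mathcal{O}_{Z}$ (valid because $X$ is normal, $\pi$ is birational, and $\psi$ is surjective with connected fibers, via Stein factorization), yields the required surjection $\mathcal{O}_{Z}\twoheadrightarrow(\psi\circ\pi)_{*}\mathcal{O}_{N}$. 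A second application of Stein factorization, this time to the proper map $N\to Z$, converts this surjection into connectedness of the fibers of $N\to Z$ over a neighborhood of each point of $Z$: the Stein factor $Z'\to Z$ is a closed immersion, so $N\to Z'$ has geometrically connected fibers.

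The main obstacle is precisely the relative vanishing step: Theorem~\ref{theorem:Shokurov-vanishing} is recorded in the excerpt only in absolute form, so one must either import the relative version from~\cite[Lemma~5.7]{Sho93} or run relative Kawamata--Viehweg by hand on the composition $\bar X\to Z$. A minor bookkeeping issue is to check that $\mathrm{Supp}(N)=\mathrm{LCS}(\bar X, B^{\bar X})$ set-theoretically, which is immediate from Definition~\ref{definition:log canonical-singularities} since a prime component of $B^{\bar X}$ contributes to $N$ exactly when its coefficient is at least~$1$. Once these points are granted, the remainder of the argument is a direct parallel of the proof of Theorem~\ref{theorem:connectedness}, with $(\psi\circ\pi)_{*}$ playing the role of $H^{0}$.
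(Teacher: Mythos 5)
The paper itself gives no proof of this statement --- it is quoted directly from \cite[Lemma~5.7]{Sho93} --- so your proposal can only be measured against the standard argument, whose overall shape (relative Kawamata--Viehweg vanishing along $\psi\circ\pi$, a pushed-forward short exact sequence, Stein factorization) you reproduce correctly. The genuine gap is in your choice of $N=\lfloor B^{\bar X}\rfloor$. In general this divisor is \emph{not} effective, and its support is strictly larger than $\mathrm{LCS}(\bar X, B^{\bar X})$: any $\pi$-exceptional divisor $E_{i}$ with $c_{i}>0$ (positive discrepancy, the typical case for a log terminal pair --- e.g. the blow-up of a smooth point away from $\mathrm{Supp}(B_{X})$) enters $B^{\bar X}$ with coefficient $-c_{i}<0$, hence enters $\lfloor B^{\bar X}\rfloor$ with coefficient $\lfloor -c_{i}\rfloor\leqslant -1$, while contributing nothing to the LCS. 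Consequently the sequence $0\to\mathcal{O}_{\bar X}(-N)\to\mathcal{O}_{\bar X}\to\mathcal{O}_{N}\to 0$ does not exist ($\mathcal{O}_{\bar X}(-N)$ is not a subsheaf of $\mathcal{O}_{\bar X}$), and your closing ``bookkeeping'' claim that a component contributes to $N$ exactly when its coefficient is at least $1$ is false. You also cannot escape by redefining $N$ as the round-down of only the part with coefficients $\geqslant 1$: then the support claim holds but your vanishing computation breaks, since $\{B^{\bar X}\}$ must then be replaced by a divisor with negative coefficients, to which Kawamata--Viehweg does not apply.

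The repair is standard. Write $B^{\bar X}=A+F$, where $A$ is the part with coefficients $<1$ and $F$ the part with coefficients $\geqslant 1$. Since the supports are disjoint, $\lceil -A\rceil-\lfloor F\rfloor=-\lfloor B^{\bar X}\rfloor=-N$, so the vanishing $R^{i}(\psi\circ\pi)_{*}\mathcal{O}_{\bar X}(-N)=0$ that you establish is in fact the right one. The correct sequence is
$$
0\to\mathcal{O}_{\bar X}\big(\lceil -A\rceil-\lfloor F\rfloor\big)\to\mathcal{O}_{\bar X}\big(\lceil -A\rceil\big)\to\mathcal{O}_{\lfloor F\rfloor}\big(\lceil -A\rceil\big)\to 0,
$$
whose pushforward requires the additional input $(\psi\circ\pi)_{*}\mathcal{O}_{\bar X}(\lceil -A\rceil)=\mathcal{O}_{Z}$. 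This holds because $\lceil -A\rceil$ is effective and $\pi$-exceptional (here one uses $B_{X}\geqslant 0$: the only components of $B^{\bar X}$ with negative coefficients are $\pi$-exceptional), so $\pi_{*}\mathcal{O}_{\bar X}(\lceil -A\rceil)=\mathcal{O}_{X}$ by normality of $X$, and then $\psi_{*}\mathcal{O}_{X}=\mathcal{O}_{Z}$. Your proposal justifies only $(\psi\circ\pi)_{*}\mathcal{O}_{\bar X}=\mathcal{O}_{Z}$, which is not the sheaf that actually occurs. Finally, the surjection $\mathcal{O}_{Z}\twoheadrightarrow(\psi\circ\pi)_{*}\mathcal{O}_{\lfloor F\rfloor}(\lceil -A\rceil)$ gives connectedness because every connected component of $\lfloor F\rfloor=\mathrm{LCS}(\bar X,B^{\bar X})$ lying over a neighborhood of a point of $Z$ contributes a nonzero direct summand (the section $1$ of the effective divisor $\lceil -A\rceil$ restricted to that component); with these corrections your Stein factorization conclusion goes through.
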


The following result is a corollary
Theorem~\ref{theorem:Shokurov-vanishing}
(see~\cite[Theorem~4.1]{Na90}).

\begin{lemma}\label{lemma:rational-tree}
Suppose that $-(K_{X}+B_{X})$ is nef and big and
$\mathrm{dim}(\mathrm{LCS}(X, B_{X}))=1$. Then
\begin{itemize}
\item the locus $\mathrm{LCS}(X, B_X)$ is a connected union of smooth rational curves,%
\item every intersecting irreducible components of the locus $\mathrm{LCS}(X, B_X)$ meet transversally,%
\item the locus $\mathrm{LCS}(X, B_X)$ does not contain a cycle of smooth rational curves.%
\end{itemize}
\end{lemma}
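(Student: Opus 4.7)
The plan is to split the argument along two levels: connectedness, which is immediate from the Shokurov connectedness theorem, and the tree-of-$\mathbb{P}^{1}$'s structure, which should follow from Nadel vanishing combined with a classical genus-zero argument.

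First I would observe that the first bullet follows directly from Theorem~\ref{theorem:connectedness}. For the remaining bullets, I would apply Theorem~\ref{theorem:Shokurov-vanishing} with $D=0$ and $H=-(K_{X}+B_{X})$ (nef and big by hypothesis) to obtain $H^{i}(X,\mathcal{I}(X,B_{X}))=0$ for all $i\geqslant 1$. Feeding this into the exact sequence
$$
0\to\mathcal{I}(X,B_{X})\to\mathcal{O}_{X}\to\mathcal{O}_{\mathcal{L}(X,B_{X})}\to 0
$$
yields $h^{0}(\mathcal{O}_{\mathcal{L}(X,B_{X})})=1$ and $h^{1}(\mathcal{O}_{\mathcal{L}(X,B_{X})})=0$.

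Next I would compare $\mathcal{L}(X,B_{X})$ with the reduced subscheme $Z\subset X$ supported on $\mathrm{LCS}(X,B_{X})$, which by the hypothesis $\dim\mathrm{LCS}(X,B_{X})=1$ is a connected reduced projective curve. From the exact sequence $0\to\mathcal{N}\to\mathcal{O}_{\mathcal{L}(X,B_{X})}\to\mathcal{O}_{Z}\to 0$, with $\mathcal{N}$ the nilradical supported in dimension at most $1$, one has $H^{2}(X,\mathcal{N})=0$ by dimension; hence $H^{1}(\mathcal{O}_{\mathcal{L}(X,B_{X})})\to H^{1}(\mathcal{O}_{Z})$ is surjective, giving $h^{1}(\mathcal{O}_{Z})=0$. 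Combined with connectedness this gives $p_{a}(Z)=0$.

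The three bullets would then follow at once from the classical fact that a connected reduced projective curve of arithmetic genus zero is a tree of smooth rational curves meeting transversally. Concretely, if $Z=\bigcup_{i=1}^{n}C_{i}$ with normalization $\nu\colon\widetilde{Z}\to Z$, the sequence $0\to\mathcal{O}_{Z}\to\nu_{*}\mathcal{O}_{\widetilde{Z}}\to\mathcal{F}\to 0$ gives
$$
\sum_{i=1}^{n}g\bigl(\widetilde{C}_{i}\bigr)+\sum_{p\in Z^{\mathrm{sing}}}\delta_{p}=n-1,
$$
and combining with $\delta_{p}\geqslant r_{p}-1$ (for $r_{p}$ the number of local analytic branches at $p$) and $b_{1}(\Gamma)=\sum_{p}(r_{p}-1)-n+1$ for the dual graph $\Gamma$ forces $g(\widetilde{C}_{i})=0$ for all $i$, each singularity to be an ordinary multiple point with distinct tangents, and $b_{1}(\Gamma)=0$. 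The main obstacle I foresee is the second step, namely passing from $\mathcal{L}(X,B_{X})$, which may carry embedded or non-reduced structure, to the reduced locus $Z$ without losing the vanishing of $h^{1}$; the short surjectivity argument above is what makes the Nadel-vanishing strategy go through cleanly.
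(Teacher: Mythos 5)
The paper does not actually prove this lemma --- it only cites \cite{Na90} --- and your argument is a faithful reconstruction of Nadel's proof, so the strategy is exactly the intended one. The passage from $\mathcal{L}(X,B_X)$ to the reduced curve $Z$ via the nilradical sequence, the identity $p_a(Z)=\sum_i g(\widetilde{C}_i)+\sum_p(\delta_p-r_p+1)+b_1(\Gamma)$ with all three terms non-negative, and the resulting conclusions (smooth rational components, ordinary transversal intersections, no cycles) are all correct. One expository slip: the first bullet does \emph{not} ``follow directly'' from Theorem~\ref{theorem:connectedness} --- connectedness does, but smoothness and rationality of the components are part of the genus-zero argument you only give afterwards.

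The one genuine gap is in the sentence ``Feeding this into the exact sequence yields $h^0(\mathcal{O}_{\mathcal{L}(X,B_X)})=1$ and $h^1(\mathcal{O}_{\mathcal{L}(X,B_X)})=0$.'' From $0\to\mathcal{I}(X,B_X)\to\mathcal{O}_X\to\mathcal{O}_{\mathcal{L}(X,B_X)}\to 0$ and the vanishing $H^1(\mathcal{I}(X,B_X))=H^2(\mathcal{I}(X,B_X))=0$ supplied by Theorem~\ref{theorem:Shokurov-vanishing}, the long exact sequence gives $h^1(\mathcal{O}_{\mathcal{L}(X,B_X)})=h^1(\mathcal{O}_X)$, not $0$. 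So your argument tacitly uses $H^1(X,\mathcal{O}_X)=0$, which is nowhere justified. This is automatic in Nadel's original setting ($X$ a Fano manifold) and wherever the lemma is invoked in this paper ($X$ a smooth Fano threefold), but in the stated generality ($X$ klt, $-(K_X+B_X)$ nef and big for some effective $B_X$) it is an assertion that needs its own argument (e.g.\ that $X$ has rational singularities and is of Fano type, hence rationally connected); without it the chain from Nadel vanishing to $p_a(Z)=0$ does not close. Add that hypothesis or that justification and the proof is complete.
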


Let $P$ be a point in $X$. Let us consider an effective  divisor
$$
\Delta=\sum_{i=1}^{r}\varepsilon_{i}B_{i}\sim_{\mathbb{Q}} B_{X},%
$$
where $\varepsilon_{i}$ is a non-negative rational number. Suppose
that
\begin{itemize}
\item the divisor $\Delta$ is a $\mathbb{Q}$-Cartier divisor,%
\item the equivalence $\Delta\sim_{\mathbb{Q}} B_{X}$ holds,%
\item the log pair  $(X, \Delta)$ is log canonical in the point $P\in X$.%
\end{itemize}

\begin{remark}
\label{remark:convexity} Suppose that $(X, B_{X})$ is not log
canonical in the point $P\in X$. Put
$$
\alpha=\mathrm{min}\left\{\frac{a_{i}}{\varepsilon_{i}}\ \Big\vert\ \varepsilon_{i}\ne 0\right\}.%
$$
Note that $\alpha$ is well defined, because there is
$\varepsilon_{i}\ne 0$. Then $\alpha<1$, the log pair
$$
\left(X,\ \sum_{i=1}^{r}\frac{a_{i}-\alpha\varepsilon_{i}}{1-\alpha}B_{i}\right)%
$$
is not log canonical in the point $P\in X$, the equivalence
$$
\sum_{i=1}^{r}\frac{a_{i}-\alpha\varepsilon_{i}}{1-\alpha}B_{i}\sim_{\mathbb{Q}} B_{X}\sim_{\mathbb{Q}}\Delta%
$$
holds, and at least one irreducible component of the divisor
$\mathrm{Supp}(\Delta)$ is not contained in
$$
\mathrm{Supp}\left(\sum_{i=1}^{r}\frac{a_{i}-\alpha\varepsilon_{i}}{1-\alpha}B_{i}\right).
$$
\end{remark}

The following result is an easy corollary of
Remark~\ref{remark:convexity}.

\begin{lemma}\label{lemma:fixed-or-mobile}
Let $X$ be a smooth Fano variety such that
$\mathrm{Pic}(X)=\mathbb{Z}[H]$ for some divisor
$H\in\mathrm{Pic}(X)$, and let $G\subset\mathrm{Aut}(X)$ be a
subgroup. Let $\lambda$ be a rational number such that
\begin{itemize}
\item  $\mathrm{lct}(X, D)\geqslant \lambda/n$ for every $G$-invariant divisor $D\in |nH|$;%
\item  $\mathrm{lct}(X, \mathcal{D})\geqslant \lambda/n$ for every $G$-invariant linear subsystem $\mathcal{D}\subset |nH|$ that has no fixed components.%
\end{itemize}
Then
$$
\mathrm{lct}\Big(X,\ G\Big)\geqslant\lambda.
$$
\end{lemma}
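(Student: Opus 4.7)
\emph{Proof plan.}
I would argue that for every $G$-invariant linear system $\mathcal{D}\subset|nH|$ one has $\mathrm{lct}(X,\mathcal{D})\geqslant\lambda/n$; because $\mathrm{Pic}(X)=\mathbb{Z}[H]$, every linear system entering the definition of $\mathrm{lct}(X,G)$ is of this form, so this bound will yield the lemma. Write $\mathcal{D}=F+\mathcal{M}$ with $F\in|aH|$ the fixed part and $\mathcal{M}\subset|bH|$ the mobile part, so that $a+b=n$. Since the fixed part of a linear system is canonically attached to it, the $G$-invariance of $\mathcal{D}$ descends both to $F$ and to $\mathcal{M}$, so the two bullets of the hypothesis apply to them individually. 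The degenerate cases $F=0$ and $\mathcal{M}=\varnothing$ are immediate from the bullets (2) and (1) respectively, so the only interesting case is $a,b>0$.

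In this case I would argue by contradiction using Remark~\ref{remark:convexity}. Suppose $(X,(\lambda/n)\mathcal{D})$ is not log canonical at some point $P$; fixing a general $M\in\mathcal{M}$, this means $(X,B_X)$ is not log canonical at $P$, where $B_X=(\lambda/n)F+(\lambda/n)M$. Hypothesis~(1) gives $\mathrm{lct}(X,F)\geqslant\lambda/a$, so the auxiliary $\mathbb{Q}$-divisor
$$
\Delta=\tfrac{\lambda}{a}F\sim_{\mathbb{Q}}\lambda H\sim_{\mathbb{Q}} B_X
$$
has the property that $(X,\Delta)$ is log canonical everywhere, and in particular at $P$. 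Applying Remark~\ref{remark:convexity} with the coefficients $a_F=a_M=\lambda/n$ and $\varepsilon_F=\lambda/a$, $\varepsilon_M=0$ yields $\alpha=(\lambda/n)/(\lambda/a)=a/n<1$, and a direct substitution shows that the new $\mathbb{Q}$-divisor produced by the remark is
$$
B_X'=(\lambda/b)\,M,
$$
i.e.\ the fixed part $F$ is eliminated from the support while the coefficient on $M$ rescales from $\lambda/n$ up to $\lambda/b$.

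Remark~\ref{remark:convexity} asserts that $(X,B_X')$ is still not log canonical at $P$. But hypothesis~(2), applied to the mobile $G$-invariant system $\mathcal{M}\subset|bH|$, gives $\mathrm{lct}(X,\mathcal{M})\geqslant\lambda/b$, so for a general $M\in\mathcal{M}$ the pair $(X,(\lambda/b)M)$ is log canonical at every point of $X$, and in particular at $P$. This contradicts the conclusion of Remark~\ref{remark:convexity}, so $(X,(\lambda/n)\mathcal{D})$ must be log canonical at every $P$, giving the required bound. The crux of the argument, and the only place where any choice is made, is the recognition that hypothesis~(1) supplies exactly the auxiliary divisor $\Delta$ needed for Remark~\ref{remark:convexity} to kill off $F$: this swaps the non-log-canonicity problem for $(X,(\lambda/n)\mathcal{D})$ into one for $(X,(\lambda/b)M)$, which hypothesis~(2) then forbids. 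I do not see any genuinely hard step; the $G$-invariance is used only to ensure that $F$ and $\mathcal{M}$ are separately covered by the hypotheses.
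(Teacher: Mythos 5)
Your argument is essentially the paper's: decompose $\mathcal{D}=F+\mathcal{M}$ into fixed and mobile parts, note both are $G$-invariant, use Remark~\ref{remark:convexity} with the auxiliary divisor $\Delta=\tfrac{\lambda}{a}F$ (log canonical by the first hypothesis) to strip off $F$ and rescale the mobile part to coefficient $\lambda/b$, then contradict the second hypothesis. The computation of $\alpha=a/n$ and of the resulting divisor $(\lambda/b)M$ is correct.

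The one place where you diverge from the paper is that you work throughout with a \emph{single} general member $M\in\mathcal{M}$, and this creates a gap in the final step. You need the implication ``$(X,\tfrac{\lambda}{b}\mathcal{M})$ is log canonical $\Rightarrow$ $(X,\tfrac{\lambda}{b}M)$ is log canonical for general $M$''. For a linear system without fixed components the general member is reduced, so if $\lambda/b>1$ the pair $(X,\tfrac{\lambda}{b}M)$ is automatically non--log canonical while $(X,\tfrac{\lambda}{b}\mathcal{M})$ may well be log canonical: there is then no contradiction. Since the lemma is stated for an arbitrary rational $\lambda$ (and the paper elsewhere uses invariant thresholds as large as $4$), the single-member reduction is not valid in general; even in the range $\lambda\leqslant 1$, where the implication does hold, it requires a Bertini/log-resolution justification that you do not supply. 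The paper sidesteps all of this by replacing $\mathcal{M}$ with $\tfrac{1}{r}\sum_{i=1}^{r}M_{i}$ for $r\gg 0$ general members, invoking \cite[Theorem~4.8]{Ko97} in both directions; that averaging is exactly what makes the comparison between the linear system and an actual $\mathbb{Q}$-divisor unconditional. (Your first reduction, from $(X,\tfrac{\lambda}{n}\mathcal{D})$ not log canonical to $(X,\tfrac{\lambda}{n}(F+M))$ not log canonical, is fine: discrepancies only worsen when the free part is replaced by a general member.)
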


\begin{proof}
Suppose that $\mathrm{lct}(X, G)<\lambda$. Then there are a
natural number $n$ and a $G$-invariant linear subsystem
$\mathcal{D}\subset |nH|$ such that the log pair
$$
\left(X, \frac{\lambda}{n}\mathcal{D}\right)
$$
is not log canonical. Put $\mathcal{D}=F+\mathcal{M}$, where $F$
is a fixed part of the linear system $\mathcal{D}$, and
$\mathcal{M}$ is a $G$-invariant linear system that has no fixed
components.

Let $M_{1},\ldots,M_{r}$ be general divisors in $\mathcal{M}$,
where $r\gg 0$. Then
$$
\left(X,\
\frac{\lambda}{n}\left(F+\frac{\sum_{i=1}^{r}M_{i}}{r}\right)\right)
$$
is not log canonical by \cite[Theorem~4.8]{Ko97}.

Since $\mathrm{Pic}(X)=\mathbb{Z}[H]$, we have $F\sim n_1H$ and
$\mathcal{M}\sim n_2H$ for some  $n_1, n_2\in\mathbb{Z}_{>0}$,
such that $n_{1}+n_{2}=n$. By Remark~\ref{remark:convexity}, we
see that the log pair
$$
\left(X,\ \frac{\lambda}{n_{2}r}\sum_{i=1}^{r}M_{i}\right)
$$
is not log canonical, because $F$ is $G$-invariant. Then the log
pair
$$
\left(X,\ \frac{\lambda}{n_{2}}\mathcal{M}\right)
$$
is not log canonical by \cite[Theorem~4.8]{Ko97}, which is a
contradiction.
\end{proof}

The following calculation will be useful in
Section~\ref{section:V5}.

\begin{lemma}\label{lemma:multiplicity-2} Let
$\mathrm{dim}(X)=3$; let $C\subset X$ be an irreducible reduced
curve, and $P\in C$ a point, such that
$$
\mathrm{Sing}\big(C\big)\not\ni P\not\in\mathrm{Sing}\big(X\big).
$$
Let $L\subset X$ be a curve such that $P\not\in\mathrm{Sing}(L)$,
and $D$ a $\mathbb{Q}$-divisor on $X$ such that
$C\subset\mathrm{Supp}(D)\not\supset L$. Assume that $L$ and $C$ are
tangent at $P$. Then
$$
\mathrm{mult}_{P}\Big(D\cdot L\Big)\geqslant 2\mathrm{mult}_C\big(D\big).%
$$
\end{lemma}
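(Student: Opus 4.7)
The plan is to use the blow-up of $P$ to split the intersection $D\cdot L$ into contributions that account for the multiplicities along $C$ twice: once through the ordinary point multiplicity of $D$ at $P$, once through the local intersection multiplicity after blowing up. Tangency guarantees that the strict transforms of $C$ and $L$ still meet on the exceptional divisor, which is what lets us extract the second copy of $\mathrm{mult}_C(D)$.

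First I would note that the statement is local at $P$, and since $P$ is a smooth point of $X$ we may treat $D$ as an effective Cartier divisor on a neighborhood of $P$. Let $\sigma\colon Y\to X$ be the blow-up at $P$, with exceptional divisor $E\cong\mathbb{P}^{2}$, and let $\tilde L,\tilde C,\tilde D$ denote strict transforms. Since $L$ and $C$ are smooth at $P$, each of $\tilde L,\tilde C$ meets $E$ transversally in one point, and the tangency hypothesis says the two tangent directions $T_{P}L=T_{P}C\subset T_{P}X$ agree, so these points coincide; call it $p$. Writing $\sigma^{*}D=\tilde D+m_{0}E$ with $m_{0}=\mathrm{mult}_{P}(D)$, and using that $\sigma$ is an isomorphism at the generic point of $C$ while $\tilde C\not\subset E$, one gets $\mathrm{mult}_{\tilde C}(\tilde D)=\mathrm{mult}_{C}(D)=m$.

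Next, I would apply the projection formula $\sigma_{*}(\sigma^{*}D\cdot\tilde L)=D\cdot L$. Because $\tilde L$ meets $E$ transversally in the single point $p$, the only contribution of $\sigma^{*}D\cdot\tilde L$ over $P$ comes from $p$, giving
$$
\mathrm{mult}_{P}\bigl(D\cdot L\bigr)=i_{p}\bigl(\sigma^{*}D,\tilde L\bigr)=i_{p}\bigl(\tilde D,\tilde L\bigr)+m_{0}\cdot i_{p}\bigl(E,\tilde L\bigr)=i_{p}\bigl(\tilde D,\tilde L\bigr)+m_{0}.
$$
Since $p\in\tilde C$ and $\mathrm{mult}_{\tilde C}(\tilde D)=m$, we have $\mathrm{mult}_{p}(\tilde D)\geqslant m$; and since $\tilde L$ is smooth at $p$, the standard inequality $i_{p}(\tilde D,\tilde L)\geqslant\mathrm{mult}_{p}(\tilde D)$ (obtained by substituting a local parameter of $\tilde L$ into a local equation of $\tilde D$) gives $i_{p}(\tilde D,\tilde L)\geqslant m$. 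Finally, $P\in C$ implies $m_{0}=\mathrm{mult}_{P}(D)\geqslant\mathrm{mult}_{C}(D)=m$. Combining yields $\mathrm{mult}_{P}(D\cdot L)\geqslant m+m=2m$.

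The only step that is not purely formal is the identification of $\tilde L\cap E$ with $\tilde C\cap E$, and this is precisely where the tangency hypothesis enters; everything else is a bookkeeping exercise with the projection formula and the basic inequality $i_{p}(\tilde D,\tilde L)\geqslant \mathrm{mult}_{p}(\tilde D)$. An alternative, slightly more hands-on, route would be to fix analytic coordinates $(x,y,z)$ at $P$ with $L=\{x=y=0\}$, parametrize $C$ as $(a(z),b(z),z)$ with $a,b\in(z^{2})$ (using tangency), and observe that a local equation $f$ of $D$ lies in $I_{C}^{m}=(x-a(z),y-b(z))^{m}$ (here one uses that $I_{C}$ is a complete intersection so that symbolic and ordinary powers coincide); then $f(0,0,z)$ is a combination of the generators $(-a(z))^{i}(-b(z))^{m-i}\in(z^{2m})$, giving $\mathrm{ord}_{z}f(0,0,z)\geqslant 2m$. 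I would prefer the blow-up argument since it avoids the symbolic-versus-ordinary-power issue.
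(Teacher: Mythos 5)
Your proposal is correct and follows essentially the same route as the paper: blow up $P$, use tangency to identify the point where $\tilde L$ meets $E$ with $\tilde C\cap E$, split $\mathrm{mult}_P(D\cdot L)$ as $\mathrm{mult}_P(D)$ plus the local intersection of $\tilde D$ with $\tilde L$ at that point, and bound each term below by $\mathrm{mult}_C(D)$. The paper's proof is just a terser version of your projection-formula bookkeeping.
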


\begin{proof}
Let $\pi\colon \tilde{X}\to X$ be a blow-up of the point $P$, let
$E$ be an exceptional divisor of $\pi$. Denote by $\tilde{L}$,
$\tilde{C}$ and $\tilde{D}$ the proper transforms on $\tilde{X}$ of
the curves $L$ and $C$ and the divisor $D$, respectively. Then the
intersection
$$
\tilde{L}\cap\mathrm{Supp}\big(\tilde{D}\big)
$$
contains some point $\tilde{P}\in E$, since $L$ and $C$ are tangent
at $P$. Hence
\begin{multline*}
\mathrm{mult}_{P}\Big(D\cdot
L\Big)=\mathrm{mult}_P(D)+\mathrm{mult}_{\tilde{P}}\Big(\tilde{D}\cdot\tilde{L}\Big)\geqslant
\mathrm{mult}_C(D)+\mathrm{mult}_{\tilde{P}}(\tilde{D})\ge\\
\ge
\mathrm{mult}_C(D)+\mathrm{mult}_{\tilde{C}}(D)=2\mathrm{mult}_C(D).
\end{multline*}
\end{proof}

\section{Veronese double cone}
\label{section:V1}

We'll use the following notation: if $\mathcal{D}$ is a (nonempty)
linear system on the variety $X$, then $\varphi_{\mathcal{D}}$
denotes the rational map defined by $\mathcal{D}$.

Let $V$ be a smooth Fano threefold such that $(-K_V)^3=8$ and
$$
\mathrm{Pic}\big(V\big)=\mathbb{Z}\big[H\big]
$$
for some $H\in\mathrm{Pic}(V)$. Then $V$ is a hypersurface in
$\mathbb{P}(1,1,1,2,3)$ of degree $6$.

The linear system $|H|$ has the only base point $O\in V$ and defines
a rational map
$$
\varphi_{|H|}\colon V\dasharrow\mathbb{P}^2
$$
with irreducible fibers; a general fiber of $\varphi_{|H|}$ is an
elliptic curve.

\begin{remark}
\label{remark:vertical} We will refer to the subvarieties of $V$
that are swept out by the fibers of $\varphi_{|H|}$ as
\emph{vertical} subvarieties.
\end{remark}

Let $G\subset\mathrm{Aut}(V)$ be a subgroup. Note that $G$ is
finite, its action on $V$ extends to $\mathbb{P}(1, 1, 1, 2, 3)$,
and  $G$ naturally acts on $\mathbb{P}(|H|)\cong\mathbb{P}^2$.
Moreover, the following conditions are equivalent:
\begin{itemize}
\item $G$ has no fixed points on $\mathbb{P}(|H|)\cong\mathbb{P}^2$;%
\item $G$ has no invariant lines on $\mathbb{P}(|H|)\cong\mathbb{P}^2$;%
\item  $|H|$ contains no $G$-invariant surfaces;%
\item $|H|$ contains no $G$-invariant pencils (cf. the proof of \cite[Theorem~1.2]{PrMar99});%
\item $V$ is $G$-birationally superrigid (see \cite{Gr03}, \cite{Gr04}).%
\end{itemize}

Let $\mathcal{B}$ be a linear subsystem in $|-K_{X}|$, generated by
divisors of the form
$$
\lambda_{0}x^{2}+\lambda_{1}y^{2}+\lambda_{2}z^{2}+\lambda_{3}xy+\lambda_{4}xz+\lambda_{5}yz=0,
$$
where $x$, $y$ and $z$ are coordinates of weight $1$ in $\P(1, 1, 1, 2, 3)$.
The statement of Theorem~\ref{theorem:V1} is implied by the
following result.

\begin{theorem}
\label{theorem:V1-lct} Supppose that the linear system $\mathcal{B}$
contains no $G$-invariant divisors. Then $\lct(V, G)\ge 1$.
\end{theorem}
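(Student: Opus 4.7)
The plan is proof by contradiction using a Nadel-type analysis of minimal log canonical centers. Suppose $\lct(V, G) < 1$. By Lemma~\ref{lemma:fixed-or-mobile} (applied with $\lambda = 1$ and the role of $H$ there played by $-K_V = 2H$, followed by a standard $G$-averaging if the offending object is a mobile linear system), there exists a $G$-invariant effective $\mathbb{Q}$-divisor $D \sim_{\mathbb{Q}} -K_V$ with $c := \lct(V, D) < 1$. Since $-K_V - cD \sim_{\mathbb{Q}} (1-c)(-K_V)$ is ample, Theorem~\ref{theorem:connectedness} implies that $\mathrm{LCS}(V, cD)$ is a nonempty, $G$-invariant, connected proper subvariety of $V$. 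Let $Z$ be a minimal element of $\mathbb{LCS}(V, cD)$; the goal is to exclude every possibility for $\dim Z \in \{2, 1, 0\}$.

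If $\dim Z = 2$, then $\mathrm{mult}_Z D \geq 1/c > 1$, and by $G$-symmetry every $G$-translate of $Z$ occurs in $D$ with the same multiplicity. Writing $Z \sim kH$ and summing over the $G$-orbit of $Z$ in the inequality $\sum_{g \in G/\mathrm{Stab}(Z)} \mathrm{mult}_Z(D) \cdot g(Z) \leq D \sim_{\mathbb{Q}} 2H$ forces $k = |G \cdot Z| = 1$, so $Z$ is a $G$-invariant divisor in $|H|$ cut out on $V$ by some $G$-semi-invariant linear form $L(x, y, z)$. But then $\{L^2 = 0\} \in \mathcal{B}$ is $G$-invariant, contradicting the hypothesis.

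If $\dim Z = 1$, then by Lemma~\ref{lemma:rational-tree} $Z$ is a smooth rational curve, and the 1-dimensional part of $\mathrm{LCS}(V, cD)$ is a $G$-invariant tree of such curves. Consider $\varphi_{|H|}(Z) \subset \mathbb{P}^2$: if this image is a curve, its $G$-orbit forms a $G$-invariant plane curve whose components have degree $\geq 3$ (since a $G$-invariant line or conic would lift to a $G$-invariant element of $|H|$ or of $\mathcal{B}$ respectively, both forbidden); if this image is a point $p$, then $Z$ lies in a special fiber of $\varphi_{|H|}$ and the $G$-orbit of $p$ is a finite subset of $\mathbb{P}^2$ not contained in any $G$-invariant line or conic. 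In both subcases, an intersection-theoretic estimate using $H^3 = 1$ and $D \cdot H^2 = 2$, combined with Theorem~\ref{theorem:Shokurov-vanishing} applied to a general section from $|H|$ (or inversion of adjunction on such a section), yields a contradiction.

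The genuinely subtle case is $\dim Z = 0$: then $Z = \{P\}$ for a $G$-fixed point $P$, which typically includes the base point $O$ of $|H|$. As flagged in the remark following Theorem~\ref{theorem:V1}, the argument of \cite{Na90} excluding isolated centers via freeness of the group action cannot be imported verbatim here, because $G$ always fixes $O$. The plan is to pass to the blow-up $\sigma\colon \tilde V \to V$ at $O$, extending $\varphi_{|H|}$ to a morphism $\pi\colon \tilde V \to \mathbb{P}^2$, and to apply Theorem~\ref{theorem:log adjunction-connectedness-theorem} relative to $\pi$: the lifted LC locus must be connected in a neighborhood of every $\pi$-fiber, and combined with the $\mathcal{B}$-hypothesis (which precludes $G$-invariant plane curves of degree $\leq 2$ in $\mathbb{P}^2$) this forces any would-be isolated center over $O$ to propagate along a $\pi$-fiber, reducing to the 1-dimensional case already handled. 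The main obstacle, and the technical heart of the argument, is the precise discrepancy computation for $\sigma$ together with controlling the local singularities of $D$ at $O$ by means of the Veronese-cone structure of $V$ there.
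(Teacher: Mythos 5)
Your reduction to a $G$-invariant effective $\mathbb{Q}$-divisor $D\sim_{\mathbb{Q}}-K_V$ with a non-log-canonical pair, and your exclusion of a two-dimensional centre, agree with the paper. For a one-dimensional centre you name the right tools (Lemma~\ref{lemma:rational-tree} together with Theorem~\ref{theorem:Shokurov-vanishing} on a general $S\in|H|$, where $h^{0}(\mathcal{O}_S(-K_S))=2$ bounds the number of isolated log canonical centres of $(S,\lambda D\vert_S)$ by two, against the $\geqslant 3$ points of $S\cap C$ supplied by $\deg\varphi_{|H|}(C)\geqslant 3$). But your ``image is a point'' subcase is handled incorrectly: since the fibres of $\varphi_{|H|}$ are irreducible of arithmetic genus one, a vertical component would be an entire fibre and hence not a smooth rational curve, which is already forbidden by Lemma~\ref{lemma:rational-tree}; your appeal to orbits in $\mathbb{P}^2$ not lying on invariant conics does not close this subcase.

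The genuine gap is the case $\mathrm{LCS}(V,\lambda D)=\{O\}$, which you rightly call the crux but then leave open, and the mechanism you propose would not work. First, the bound you defer as ``the main obstacle'' is cheap: every fibre $F$ of $\varphi_{|H|}$ passes through $O$ and satisfies $D\cdot F=2$, so $\mathrm{mult}_O(D)>2$ would force every fibre into $\mathrm{Supp}(D)$; hence $\mathrm{mult}_O(D)\leqslant 2$. Second, after blowing up $O$ the crepant pair is $\big(\bar V,\ \lambda\bar D+(\lambda\,\mathrm{mult}_O(D)-2)E\big)$ with negative coefficient at $E$, so its non-log-canonical locus is a proper $G$-invariant subset of $E\cong\mathbb{P}^2$; intersecting $\bar D$ with a general line of $E$ and using $\mathrm{mult}_O(D)\leqslant 2$ and $\lambda<1$ shows that a one-dimensional centre in $E$ would have to be a $G$-invariant line, excluded by the hypothesis on $\mathcal{B}$. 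What remains is a finite $G$-invariant set of points of $E$, and here your plan fails: relative connectedness with respect to the elliptic fibration yields nothing, because each elliptic fibre meets $E$ in a single point, so a finite subset of $E$ is trivially connected near every fibre and nothing ``propagates'' into the one-dimensional case. The paper instead applies Theorem~\ref{theorem:log adjunction-connectedness-theorem} to the blow-down $\bar V\to V$ over the point $O$, concluding that the locus is a single $G$-fixed point of $E\cong\mathbb{P}^2$ --- impossible, since the absence of $G$-invariant divisors in $\mathcal{B}$ implies that $G$ acts on $E$ without fixed points. Without this final step your argument does not reach a contradiction.
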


Let us prove Theorem~\ref{theorem:V1-lct}. Assume that
$\mathrm{lct}(V, G)<1$. Then the linear system $|H|$ does not
contain $G$-invariant divisors, but there exists an effective
$G$-invariant $\mathbb{Q}$-divisor $D\sim_{\mathbb{Q}} -K_V$, such
that
$$
\mathbb{LCS}\Big(V,\ \lambda D\Big)\neq\varnothing
$$
for some $1>\lambda\in\mathbb{Q}$. The set $\mathrm{LCS}(V, \lambda
D)$ is $G$-invariant.

\begin{lemma}\label{lemma:dim-2}
The set $\mathbb{LCS}(V, \lambda D)$ does not contain divisors.
\end{lemma}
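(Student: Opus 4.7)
The plan is to argue by contradiction, combining the purely numerical constraint that $D\sim_{\mathbb{Q}}-K_V=2H$ with the $G$-invariance of $D$ and the standing assumption that $|H|$ contains no $G$-invariant divisor.

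Suppose there is a two-dimensional $Y\in\mathbb{LCS}(V,\lambda D)$. Then $Y$ is an irreducible surface $S$ which must appear in the support of $D$ with $\mathrm{mult}_S(\lambda D)\ge 1$, so $\mathrm{mult}_S(D)\ge 1/\lambda>1$. Because $\mathrm{Pic}(V)=\mathbb{Z}[H]$ and $S$ is a non-zero effective divisor, one has $S\sim mH$ for some integer $m\ge 1$.

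Next I would use $G$-invariance. Let $\{S_1=S,\dots,S_k\}$ be the $G$-orbit of $S$; since $D$ is $G$-invariant, each $S_i$ enters $D$ with the same coefficient $\mathrm{mult}_S(D)$, so
$$
D\ge \mathrm{mult}_S(D)\sum_{i=1}^{k}S_i.
$$
Taking classes in $\mathrm{Pic}(V)$ and using $D\sim 2H$ yields $\mathrm{mult}_S(D)\cdot km\le 2$; combined with $\mathrm{mult}_S(D)>1$ this forces $km<2$. Hence $k=m=1$, i.e.\ $S$ is a $G$-invariant divisor in $|H|$, which directly contradicts the fact stated just above the lemma that $|H|$ carries no $G$-invariant divisor.

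The argument is essentially a one-line numerical computation, and I expect no serious obstacle; the only subtlety is remembering to spread $\mathrm{mult}_S(D)$ over the whole $G$-orbit of $S$ so as to make the constraint $D\sim 2H$ tight enough to exclude the case $m=1$, which would not be ruled out by the multiplicity bound alone.
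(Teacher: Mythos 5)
Your argument is correct and supplies exactly the routine computation that the authors omit with the word ``Easy'': a divisorial centre must be a component $S\sim mH$ of $D$ with coefficient $>1$, and summing over the $G$-orbit of $S$ and comparing with $D\sim_{\mathbb{Q}}2H$ forces $km=1$, producing a $G$-invariant member of $|H|$, contrary to the standing assumption. The only point worth making explicit is that the inequality $\mathrm{mult}_S(D)\cdot km\leqslant 2$ comes from intersecting the effective divisor $D-\mathrm{mult}_S(D)\sum S_i$ with the ample class $H^2$ (using $H^3=1$), which you clearly intend.
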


\begin{proof}
Easy.
\end{proof}

\begin{lemma}\label{lemma:dim-1}
The set $\mathbb{LCS}(V, \lambda D)$ does not contain curves.
\end{lemma}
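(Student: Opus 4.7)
The plan is to derive a contradiction by pushing the $1$-dimensional locus $T:=\mathrm{LCS}(V,\lambda D)$ down to $\mathbb{P}^2=\mathbb{P}(|H|)$ via $\varphi_{|H|}$ and confronting the outcome with the hypothesis that $\mathcal{B}$ carries no $G$-invariant divisor.

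First I would pin down the shape of the $1$-dim locus and bound its $H$-degree. By Lemma~\ref{lemma:dim-2} we may assume $\dim\mathrm{LCS}(V,\lambda D)=1$, and since $\lambda<1$ the divisor $-(K_V+\lambda D)\sim_{\mathbb{Q}}(1-\lambda)(-K_V)$ is ample, so Theorem~\ref{theorem:connectedness} gives connectedness and Lemma~\ref{lemma:rational-tree} gives that $T=C_1\cup\cdots\cup C_n$ is a $G$-invariant, transversally crossing tree of smooth rational curves. Apply Nadel vanishing (Theorem~\ref{theorem:Shokurov-vanishing}) to $\mathcal{O}_V(H)$: the divisor $H-K_V-\lambda D\sim_{\mathbb{Q}}(3-2\lambda)H$ is ample, so $H^i(V,\mathcal{I}(V,\lambda D)\otimes\mathcal{O}(H))=0$ for $i\geqslant 1$. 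Coupling this with $h^0(V,\mathcal{O}(H))=3$ and the standard $h^0$-computation on a nodal tree of smooth rationals via the normalization sequence, which yields $h^0(T,\mathcal{O}(H)|_T)=\sum_i H\cdot C_i+1$, gives the key estimate
$$
\sum_{i=1}^n H\cdot C_i\leqslant 2.
$$

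Next I would push the tree to $\mathbb{P}^2$. After resolving the simple base point $O$ of $|H|$, each $C_i\cong\mathbb{P}^1$ maps either to a point (if contracted) or to an irreducible curve of degree at most $H\cdot C_i$, so the $G$-invariant image $\Sigma\subset\mathbb{P}^2$ has $1$-dimensional part of total degree at most $2$. If this $1$-dimensional part is nonempty, then it is a $G$-invariant curve of degree $1$ or $2$, and either the curve itself or its double yields a $G$-invariant divisor in $\mathcal{B}=|\mathcal{O}_{\mathbb{P}^2}(2)|$, contradicting the hypothesis of Theorem~\ref{theorem:V1-lct}. If instead $\Sigma$ is $0$-dimensional, every $C_i$ is contracted by $\varphi_{|H|}$; since two distinct fibre closures of $\varphi_{|H|}$ meet only at $O$, the connectedness of $T$ forces $\Sigma$ to consist of at most two points, which gives either a $G$-fixed point in $\mathbb{P}^2$ or a $G$-invariant pair spanning a $G$-invariant line, and doubling once more produces a $G$-invariant element of $\mathcal{B}$, again a contradiction.

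The main technical obstacle is the bookkeeping behind the bound $\sum H\cdot C_i\leqslant 2$: one must relate cohomology on the (possibly non-reduced) multiplier ideal subscheme $\mathcal{L}(V,\lambda D)$ to cohomology on the reduced tree $T$, and one must account for the exceptional correction $\pi^*H=\tilde\varphi_{|H|}^*\mathcal{O}_{\mathbb{P}^2}(1)+E$ coming from the base point $O$, which is precisely what allows a vertical curve through $O$ to have $H\cdot C_i>0$ and what makes the contracted case non-trivial, in agreement with the fact that $\mathrm{Aut}(V_1)$ always has fixed points on $V$.
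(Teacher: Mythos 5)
Your overall strategy is sound in outline and genuinely different from the paper's, but it has an unresolved gap exactly at the step you flag as ``the main technical obstacle,'' and that step is the heart of the argument. Nadel vanishing applied to $\mathcal{O}_V(H)$ gives surjectivity of $H^{0}(V,\mathcal{O}_V(H))\to H^{0}(\mathcal{L},\mathcal{O}_V(H)\vert_{\mathcal{L}})$ onto the (generally non-reduced) subscheme $\mathcal{L}=\mathcal{L}(V,\lambda D)$, but it does \emph{not} give surjectivity onto $H^{0}(T,\mathcal{O}_V(H)\vert_{T})$ for the reduced tree $T$: writing $0\to\mathcal{K}\to\mathcal{O}_{\mathcal{L}}\to\mathcal{O}_{T}\to 0$, the cokernel of the induced map on twisted sections lies in $H^{1}(\mathcal{K}\otimes\mathcal{O}_V(H))$, and $\mathcal{K}$ is supported on the one-dimensional set $T$, so this group has no reason to vanish. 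Equivalently, $H^{1}(V,\mathcal{I}_{T}\otimes\mathcal{O}_V(H))\cong H^{1}(\mathcal{K}\otimes\mathcal{O}_V(H))$ is not controlled by the vanishing you invoke. Without this, the key estimate $\sum_i H\cdot C_i\leqslant 2$ is not established, and everything downstream collapses. Making this rigorous requires a genuinely different argument (descending an adjoint-ideal type sequence from a log resolution, or a tie-breaking perturbation forcing $\mathcal{L}$ to be reduced along its one-dimensional part), none of which is sketched.

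The paper avoids this issue entirely by cutting down the dimension first: it shows the image $\varphi_{|H|}(C)$ has degree at least $3$ (otherwise a suitable multiple would give a $G$-invariant member of $\mathcal{B}$ --- the same mechanism as your final contradiction), intersects $C$ with a \emph{general} surface $S\in|H|$ (a degree-one del Pezzo with $D\vert_{S}\sim_{\mathbb{Q}}-2K_{S}$) to get at least three points of $\mathrm{LCS}(S,\lambda D\vert_{S})$, and then applies Nadel vanishing on $S$ with $-K_{S}$, where $h^{0}(\mathcal{O}_{S}(-K_S))=2$ caps the number of points at two. For a zero-dimensional scheme the surjection onto the reduced structure is automatic, so no reduced-versus-non-reduced difficulty arises. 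Two smaller points: your vertical case is actually vacuous, since by Lemma~\ref{lemma:rational-tree} the components of $T$ are smooth rational curves, while the fibres of $\varphi_{|H|}$ are irreducible of arithmetic genus one; and in that case your assertion that ``connectedness forces $\Sigma$ to consist of at most two points'' is not correct as stated --- arbitrarily many fibres are connected through the base point $O$, and it is only the (unproven) degree bound that would limit their number.
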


\begin{proof}
Let $C\subset\mathrm{LCS}(V, \lambda D)$ be a $G$-invariant curve.
Then for any point $P\in C$ one has $\mathrm{mult}_P D>1/\lambda$.
Lemma~\ref{lemma:rational-tree} implies that $C$ has a
non-vertical component. Then $\mathrm{deg}(\phi_{|H|}(C))\geqslant
3$, since the linear system $\mathcal{B}$ does not contain
$G$-invariant surfaces.

Let $S$ be a general surface in $|H|$. Put
$$
S\cap C=\Big\{P_{1},\ldots,P_{s}\Big\},
$$
where $P_{1},\ldots,P_{s}$ are distinct points. Then $s\geqslant 3$.
Moreover, one has $s>3$ if $O\in C$. So it is easy to see that one
may assume the following:
\begin{itemize}
\item $O\not\in\{P_{1},P_{2},P_{3}\}$,%
\item $\phi_{|H|}(P_{1}),\phi_{|H|}(P_{2}),\phi_{|H|}(P_{3})$ are distinct points.%
\end{itemize}

The surface $S$ is a del Pezzo surface. One has
$$
D\Big\vert_{S}\sim_{\mathbb{Q}} -2K_{S}
$$
and $-K_{S}^{2}=1$. The log pair $(S, \lambda D\vert_{S})$ is not
log terminal at $P_{1}$, $P_{2}$ and $P_{3}$.
Theorem~\ref{theorem:Shokurov-vanishing} implies that the sequence
$$
\mathbb{C}^{2}=H^{0}\Big(\mathcal{O}_{S}\big(-K_{S}\big)\Big)\longrightarrow H^{0}\Big(\mathcal{O}_{\mathcal{L}(S,\lambda D\vert_{S})}\Big)\longrightarrow H^{1}\Big(\mathcal{I}\big(S,\ \lambda D\big\vert_{S}\big)\otimes\mathcal{O}_{S}\big(-K_{S}\big)\Big)=0%
$$
is exact, since the scheme $\mathcal{L}(S,\lambda D\vert_{S})$ is
zero-dimensional by Lemma~\ref{lemma:dim-2}. In particular, the
support of the subscheme $\mathcal{L}(S,\lambda D\vert_{S})$
contains at most two points, which is a contradiction.
\end{proof}

So $\mathrm{LCS}(V, \lambda D)$ is zero-dimensional.
Theorem~\ref{theorem:connectedness} implies that $\mathrm{LCS}(V,
\lambda D)$ consists of a single point $P\in V$.

\begin{lemma}
\label{lemma:dim-0-not-O} $P=O$.
\end{lemma}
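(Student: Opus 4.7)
The plan is to argue by contradiction: assuming $P\neq O$, I would produce a $G$-invariant pencil inside $|H|$, contradicting the hypothesis of Theorem~\ref{theorem:V1-lct}.

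First, since $D$ is a $G$-invariant $\mathbb{Q}$-divisor, the locus $\mathrm{LCS}(V,\lambda D)$ is $G$-invariant; combined with Lemmas~\ref{lemma:dim-2}--\ref{lemma:dim-1} and Theorem~\ref{theorem:connectedness}, this shows that the single point $P$ is $G$-fixed. Suppose toward a contradiction that $P\neq O$. Then $\varphi_{|H|}$ is defined at $P$ (as $O$ is the unique base point of $|H|$), so $\varphi_{|H|}(P)\in\mathbb{P}^{2}$ is a $G$-fixed point. The surfaces in $|H|$ passing through $P$ correspond to lines in $\mathbb{P}^{2}$ through $\varphi_{|H|}(P)$, and therefore form a pencil $\mathcal{L}\subset|H|$, which is $G$-invariant because $P$ is $G$-fixed.

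To finish, I would translate the hypothesis of Theorem~\ref{theorem:V1-lct} into a contradiction: if $\ell$ were a $G$-semi-invariant linear form in $x,y,z$ defining a $G$-invariant surface in $|H|$, then $\ell^{2}$ would give a $G$-invariant divisor in $\mathcal{B}$; hence the absence of $G$-invariant divisors in $\mathcal{B}$ forces $|H|$ to contain no $G$-invariant surfaces, and by the equivalences listed before Theorem~\ref{theorem:V1-lct} this is in turn equivalent to $|H|$ containing no $G$-invariant pencils, contradicting the existence of $\mathcal{L}$. The only subtle point is this last translation via the squaring trick $\ell\mapsto\ell^{2}$; the rest of the argument is immediate from the $G$-equivariance of $\mathrm{LCS}(V,\lambda D)$ and the base-point structure of $|H|$.
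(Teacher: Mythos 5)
Your proof is correct and follows essentially the same route as the paper's (very terse) argument: the paper notes that $\varphi_{|H|}(P)$ would be a $G$-fixed point of $\mathbb{P}^{2}$, which is ruled out by the list of equivalent conditions preceding Theorem~\ref{theorem:V1-lct}, while you phrase the same obstruction dually as a $G$-invariant pencil of surfaces through $P$. Your explicit verification that the hypothesis on $\mathcal{B}$ excludes $G$-invariant surfaces (and hence pencils) in $|H|$ via the squaring trick $\ell\mapsto\ell^{2}$ is exactly the implicit step the paper uses at the start of the proof of Theorem~\ref{theorem:V1-lct}.
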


\begin{proof}
Assume that $P\ne O$. Then the $G$-orbit of $P$ is non-trivial,
since so is the $G$-orbit of $\varphi_{|H|}(P)$, which is a
contradiction.
\end{proof}

Let $\pi\colon\bar{V}\to V$ be a blow-up of the point $O$ with an
exceptional divisor $E$; let $\bar{D}$ be a strict transform of
$D$ on $\bar{V}$. Then the log pair
$$
\Big(\bar{V},\ \lambda\bar{D}+\big(\lambda\mathrm{mult}_O(D)-2\big)E\Big).%
$$
is not log canonical in the neighborhood of $E$. On the other hand
one has $\mathrm{mult}_O(D)\leqslant 2$, since otherwise
$\mathrm{Supp}(\bar{D})$ would contain all fibers of the elliptic
fibration $\varphi_{|\pi^*(H)-E|}$. Hence the set
$$
\mathrm{LCS}\Big(\bar{V},\ \lambda\bar{D}+\big(\lambda\mathrm{mult}_O(D)-2\big)E\Big)%
$$
contains some $G$-invariant subvariety $Z\subsetneq E$ and is
contained in $E\cong\mathbb{P}^{2}$.

\begin{lemma}
\label{lemma:V1-final} One has $\mathrm{dim}(Z)=0$.
\end{lemma}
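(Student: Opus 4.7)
Suppose for contradiction that $\dim Z\ge 1$. Because $Z\subsetneq E\cong\mathbb{P}^{2}$ cannot contain a two-dimensional irreducible component, $\dim Z=1$; let $Z_{0}\subseteq Z$ be an irreducible one-dimensional component of degree $d_{0}\ge 1$ in $E\cong\mathbb{P}^{2}$, and write $c=\lambda\,\mathrm{mult}_{O}(D)-2$, $e=\mathrm{mult}_{O}(D)\le 2$. Since $\mathrm{LCS}(V,\lambda D)=\{O\}$, no prime component of $D$ has $\lambda$-coefficient $\ge 1$, and the same holds for every prime component of $\bar D$; combined with $c<1$, the pair $(\bar V,\,\lambda\bar D+cE)$ admits no two-dimensional center of log canonical singularities, so $Z_{0}$ is an irreducible component of $\mathrm{LCS}(\bar V,\,\lambda\bar D+cE)$ and therefore $Z_{0}\in\mathbb{LCS}(\bar V,\,\lambda\bar D+cE)$.

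The plan is now to transfer this center to a surface pair via adjunction. Since $E+\lambda\bar D\ge cE+\lambda\bar D$ as $\mathbb{Q}$-divisors, $Z_{0}$ is also a center of log canonical singularities of $(\bar V,\,E+\lambda\bar D)$; Koll\'ar's adjunction for the smooth divisor $E\subset\bar V$ (with $\bar D$ not containing $E$) then transfers $Z_{0}$ to a center of log canonical singularities of the surface pair $(E,\,\lambda\bar D|_{E})$. From $\bar D=\pi^{*}D-eE$ and $E|_{E}=\mathcal{O}_{E}(-1)$, the divisor $\bar D|_{E}$ is effective of degree $e\le 2$ on $E\cong\mathbb{P}^{2}$. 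A one-dimensional log canonical center on this $\mathbb{P}^{2}$-pair must appear as a component of $\bar D|_{E}$ with multiplicity $\mu_{0}\ge 1/\lambda>1$; combined with the degree estimate $\mu_{0}d_{0}\le e\le 2$, this forces $d_{0}=1$, so $Z_{0}$ is a line in $E$.

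By $G$-invariance of $Z$, and since $\bar D$ is $G$-invariant, the $G$-orbit of $Z_{0}$ consists of $k\ge 1$ distinct lines in $E$, each a component of $\bar D|_{E}$ with the same multiplicity $\mu_{0}>1$; the bound $k\mu_{0}\le\deg\bar D|_{E}\le 2$ forces $k=1$, so $Z_{0}$ itself is $G$-invariant. Under the natural isomorphism $E=\mathbb{P}(T_{O}V)\cong\mathbb{P}(|H|)^{\vee}$ arising from $H^{0}(V,H)\cong\mathfrak{m}_{O}/\mathfrak{m}_{O}^{2}$, $G$-invariant lines in $E$ correspond to $G$-fixed points in $\mathbb{P}(|H|)$, equivalently to $G$-invariant divisors in $|H|$; this contradicts the hypothesis that $|H|$ contains no $G$-invariant divisor (a consequence of the assumption on $\mathcal{B}$). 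The main technical step is the adjunction transfer, which requires the coefficient of $E$ in the boundary to be exactly $1$; enlarging it from $c$ to $1$ is benign, because it only enlarges the collection of log canonical centers contained in~$E$.
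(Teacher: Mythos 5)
Your proof is correct and arrives at the same contradiction as the paper: the degree of a one\-/dimensional $Z$ in $E\cong\mathbb{P}^{2}$ is bounded by $\mathrm{mult}_{O}(D)\leqslant 2$ because the multiplicity along $Z$ exceeds $1/\lambda>1$, which forces $Z$ to be a single $G$-invariant line, contradicting the absence of $G$-invariant surfaces in $|H|$. The only difference is how the degree bound is obtained: the paper simply intersects $\bar{D}$ with a general line $F\subset E$ (so that $\mathrm{mult}_{O}(D)=F\cdot\bar{D}\geqslant\deg(Z)\,\mathrm{mult}_{Z}(\bar{D})>\deg(Z)/\lambda$), whereas you raise the coefficient of $E$ to $1$ and apply inversion of adjunction to pass to the surface pair $(E,\lambda\bar{D}|_{E})$, then read off the coefficient of $Z_{0}$ in $\bar{D}|_{E}$. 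Both are valid; the paper's computation is more elementary, while your adjunction step carries a small amount of extra bookkeeping (non-effective coefficient $c$, monotonicity of lc centers) that you handle correctly, and your explicit identification of $G$-invariant lines in $E$ with $G$-invariant members of $|H|$ via $H^{0}(V,H)\cong\mathfrak{m}_{O}/\mathfrak{m}_{O}^{2}$ usefully spells out a point the paper leaves implicit.
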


\begin{proof}  Suppose that $\mathrm{dim}(Z)=1$. Let $F$ be a general line
in $E\cong\mathbb{P}^{2}$. Then
$$
2\geqslant
\mathrm{mult}_O\big(D\big)=L\cdot\bar{D}\geqslant\mathrm{deg}(Z)\mathrm{mult}_{Z}\big(\bar{D}\big)>\frac{\mathrm{deg}(Z)}{\lambda}>
\mathrm{deg(Z)}.
$$
Hence $Z$ containes a $G$-invariant line. But $|H|$ does not contain
$G$-invariant surfaces, which gives a contradiction.
\end{proof}

So we see that the $G$-invariant set $\mathrm{LCS}(\bar{V},
\lambda\bar{D})$ consists of a finite number of points. By
Theorem~\ref{theorem:log adjunction-connectedness-theorem} the set
$\mathrm{LCS}(Y, \lambda\bar{D})$ consists of a single point,
since the divisor $-(K_Y+\lambda\bar{D})$ is $\pi$-ample. But $G$
acts on $E$ without fixed points, since $|H|$ contains no
$G$-invariant pencils. The contradiction concludes the proof of
Theorem~\ref{theorem:V1-lct}.

\section{Quintic del Pezzo threefold}
\label{section:V5}

Let $V_{5}$ be a smooth Fano variety such that
$$
\mathrm{Pic}\big(V_{5}\big)=\mathbb{Z}\big[H\big]
$$
and $H^3=5$. One has $-K_{V_5}\sim 2H$ (see, for example,
\cite{IsPr99}). Let $\mathcal{W}\cong\mathbb{C}^3$ be a vector
space endowed with a non-degenerate quadratic form $q$. Then the
variety $V_{5}$ is isomorphic to the variety of triples of
pairwise orthogonal (with respect to $q$) lines in $\mathcal{W}$
(see \cite{IsPr99}). In particular, there is a natural action of
the group $\mathrm{SO}_3(\mathbb{C})$ (or
$\mathrm{SL}_2(\mathbb{C})$) on the variety $V_5$.

\begin{remark}
\label{remark:V5-Aut} One can show that
$\mathrm{Aut}(V_5)=\mathrm{PSL}_2(\mathbb{C})$. By
Remark~\ref{remark:compact-vs-complex} to prove
Theorem~\ref{theorem:V5-lct} it suffices to check that $\lct(V_5,
\PSL_2(\C))=5/6$.
\end{remark}

The variety $V_5$ has a natural
$\mathrm{PSL}_2(\mathbb{C})$-equivariant stratification:
$$
V_5=U\cup\Delta\cup C,
$$
where $U$ is an open orbit which consists of triples of pairwise
distinct lines, $\Delta$ is a two-dimensional orbit which consists
of the triples $(l_1, l_1, l_2)$, where $q(l_1, l_1)=0$ and $q(l_1,
l_2)=0$, and $C$ is a one-dimensional orbit which consists of the
triples $(l, l, l)$, where $q(l, l)=0$.

The linear system $|H|$ defines an embedding
$V_{5}\subset\mathbb{P}^6$. Under this embedding the curve $C$ is
a rational normal curve of degree $6$, and $\Delta$ is swept out
by the lines that are tangent to~$C$.

\begin{lemma}\label{lemma:fixed-divisor}
One has $\mathrm{lct}(V_5, \Delta)=5/6$.
\end{lemma}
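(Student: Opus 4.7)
My plan is to reduce the computation of $\lct(V_5, \Delta)$ to a local analytic calculation at points of the one-dimensional orbit $C \subset \Delta$, where the singularities of $\Delta$ are concentrated. By $\PSL_2(\C)$-equivariance, $\mathrm{Sing}(\Delta)$ is a proper $\PSL_2(\C)$-invariant closed subset of the two-dimensional orbit closure $\Delta$, hence contained in the unique smaller orbit $C$. The reverse inclusion $C \subseteq \mathrm{Sing}(\Delta)$ will follow from the explicit local model established below. Thus $(V_5, \Delta)$ is canonical (and a fortiori log canonical) away from $C$, so it suffices to compute the local log canonical threshold of $\Delta$ at one (and hence every) point of $C$.

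Next I would identify the local analytic form of $\Delta$ at a point $p \in C$. Under the embedding $V_5 \hookrightarrow \P^6$ given by $|H|$, the curve $C$ is a rational normal sextic, and $\Delta$ is swept out by its projective tangent lines. Choose a local parametrization $\gamma(t)$ of $C$ with $\gamma(0) = p$ and local analytic coordinates $(x, y, z)$ on $V_5$ at $p$ adapted to the osculating flag of $C$, so that $\gamma'(0) = \partial_x$, the image of $\gamma''(0)$ in $T_p V_5 / T_p C$ is a nonzero multiple of $\partial_y$, and $\gamma'''(0)$ has a nontrivial $z$-component. Such coordinates exist because the osculating flag of $C$ in $V_5$ is nondegenerate, which follows from the nondegeneracy of the rational normal sextic $C \subset \P^6$ (its osculating three-plane at $p$ in $\P^6$ meets $V_5$ transversely, and so projects isomorphically onto $T_p V_5$). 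Parametrizing $\Delta$ by $(s, t) \mapsto \gamma(t) + s\,\gamma'(t)$ and eliminating $(s, t)$, a defining equation of $\Delta$ near $p$ has leading terms $z^2 - c\, y^3$ with $c \neq 0$; in other words, $\Delta$ has a cuspidal edge along $C$.

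Finally, the log canonical threshold of the plane cusp $\{z^2 = y^3\} \subset \C^2$ equals $\tfrac{1}{2} + \tfrac{1}{3} = \tfrac{5}{6}$ (by an explicit log resolution with two blow-ups, or by the Arnold--Varchenko Newton polygon formula), and this value is unchanged upon taking the product with a smooth transverse factor. Hence the local log canonical threshold of $\Delta$ at every point of $C$ equals $5/6$, and combined with log canonicity off $C$ this yields $\lct(V_5, \Delta) = 5/6$. The principal obstacle is rigorously establishing the cuspidal edge local model along $C$; this is an osculating-flag calculation, and can alternatively be deduced from the general fact that the tangent developable of a smooth curve in a smooth $3$-fold with nondegenerate third-order osculating flag has a cuspidal edge along the curve.
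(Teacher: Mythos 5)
Your proposal is correct and follows essentially the same route as the paper, whose entire proof consists of the single observation that $\Delta$ is smooth outside $C$ and has a cuspidal edge (locally $T\times\mathbb{A}^1$ with $T$ a cusp germ) along $C$, whence $\lct=5/6$. Your write-up simply supplies the details the paper leaves implicit, namely the equivariance argument locating $\mathrm{Sing}(\Delta)$ in $C$ and the osculating-flag computation identifying the tangent developable's transverse singularity as the cusp $z^{2}=y^{3}$ of log canonical threshold $1/2+1/3=5/6$.
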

\begin{proof}
The surface $\Delta$ is smooth outside $C$ and has a singularity
along $C$ that is locally isomorphic to $T\times\mathbb{A}^1$, where
$T$ is a germ of a cuspidal curve.
\end{proof}

In particular, $\mathrm{lct}(V_5, \PSL_2(\mathbb{C}))\leqslant
5/6$.

\begin{lemma}\label{lemma:mobile-system}
Let $\mathcal{D}\subset|nH|$ be a
$\mathrm{PSL}_2(\mathbb{C})$-invariant linear system on $V_5$,
such that $\Delta\not\subset\nolinebreak\mathrm{Bs}(\mathcal{D})$.
Then $\mathrm{lct}(X, \mathcal{D})\geqslant 1/n$.
\end{lemma}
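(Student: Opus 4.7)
By contradiction, suppose $\lct(V_5, \mathcal{D}) < 1/n$ and set $\lambda = \lct(V_5, \mathcal{D})$; then for a general $D \in \mathcal{D}$ the pair $(V_5, \lambda D)$ is log canonical with non-empty $\mathrm{LCS}$. First, since $\mathrm{Pic}(V_5) = \mathbb{Z}[H]$ and the only $G$-invariant prime divisor on $V_5$ is $\overline{\Delta}$, any fixed component of $\mathcal{D}$ would be a non-negative multiple of $\overline{\Delta}$; the hypothesis $\overline{\Delta} \not\subset \mathrm{Bs}(\mathcal{D})$ forces $\mathcal{D}$ to be mobile. Its base locus $\mathrm{Bs}(\mathcal{D})$ is then a $G$-invariant closed subset of codimension at least $2$; since $V_5$ has no $0$-dimensional $G$-orbits, we conclude $\mathrm{Bs}(\mathcal{D}) \subseteq C$, and the case $\mathrm{Bs}(\mathcal{D}) = \varnothing$ is ruled out by Bertini, as a general member would then be smooth. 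So $\mathrm{Bs}(\mathcal{D}) = C$.

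Bertini applied once more shows that a general $D$ is smooth on $V_5 \setminus C$, whence $\mathrm{LCS}(V_5, \lambda D) \subset C$. Shokurov's connectedness theorem applies because $-(K_{V_5} + \lambda D) \sim_\mathbb{Q} (2 - \lambda n) H$ is ample, so $\mathrm{LCS}$ is a connected closed subset of $C \cong \mathbb{P}^1$, hence either equal to $C$ itself or to a single point $P \in C$. If $\mathrm{LCS} = C$, then $(V_5, \lambda D)$ is non-klt generically along $C$, forcing $m := \mathrm{mult}_C D \geq 2/\lambda > 2n$. I restrict to a general $S \in |H|$, a smooth del Pezzo of degree $5$ meeting $C$ transversally in $6$ points; for a second general $D' \in \mathcal{D}$ without common component with $D$, the divisors $D|_S, D'|_S$ on $S$ both have multiplicity at least $m$ at each of the $6$ points of $S \cap C$, giving
$$
5n^2 = n^2 H^3 = D|_S \cdot D'|_S \geq 6 m^2.
$$
This forces $m \leq n\sqrt{5/6} < n$, contradicting $m > 2n$.

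The remaining case $\mathrm{LCS} = \{P\}$ with $P \in C$ is the main obstacle. I apply Lemma~\ref{lemma:multiplicity-2} to the tangent line $\ell_P$ to $C$ at $P$: it lies in $\overline{\Delta} \subset V_5$, is a line in the ambient $\mathbb{P}^6$, is tangent to $C$ at $P$, and does not lie in $\mathrm{Bs}(\mathcal{D}) = C$, so for general $D$ the lemma yields
$$
n = D \cdot \ell_P \geq \mathrm{mult}_P(D \cdot \ell_P) \geq 2 \mathrm{mult}_C D = 2m,
$$
whence $m \leq n/2$. To derive the contradiction I plan to invoke a Corti-type inequality for the isolated non-log-canonical center at $P$ on the smooth threefold $V_5$: for general $D_1, D_2 \in \mathcal{D}$, $\mathrm{mult}_P(D_1 \cdot D_2) > 4n^2$. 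Decomposing $D_1 \cdot D_2 = m^2 \cdot [C] + R$ with $R$ a residual $1$-cycle of $H$-degree $5n^2 - 6m^2$, and observing that no line through $P$ on $V_5$ can be a component of $R$ (since such a line does not lie in $\mathrm{Bs}(\mathcal{D}) = C$), one obtains an upper bound on $\mathrm{mult}_P R$ from the degree and the geometry of curves on $V_5$ through $P$. The hardest step will be to convert this qualitative geometric observation into a quantitative upper bound for $\mathrm{mult}_P R$ sharp enough that $m^2 + \mathrm{mult}_P R > 4n^2$ becomes incompatible with $m \leq n/2$.
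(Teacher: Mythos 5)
Your key computation---intersecting a general member $D$ with the tangent line to $C$ and invoking Lemma~\ref{lemma:multiplicity-2}---is exactly the engine of the paper's proof, and your first case ($\mathrm{LCS}=C$) is essentially correct (modulo a constant: non-klt along a codimension-two center only gives $m=\mathrm{mult}_C D\geqslant 1/\lambda>n$, not $2/\lambda$; this still contradicts your bound $6m^2\leqslant 5n^2$). The genuine gap is the isolated-point case, which you acknowledge you cannot close, and indeed the bounds you propose do not close it: Corti-type inequalities give $\mathrm{mult}_P(D_1\cdot D_2)>4n^2$, while your decomposition only yields the upper bound $m^2+\mathrm{mult}_P R\leqslant m^2+(5n^2-6m^2)=5n^2-5m^2$, which is perfectly compatible with being larger than $4n^2$ whenever $m<n/\sqrt{5}$ --- and in this case nothing forces $m$ to be large (here $C$ need not even lie in $\mathrm{Supp}(D)$, so $m$ may be $0$ and your application of Lemma~\ref{lemma:multiplicity-2} is vacuous rather than restrictive; moreover $P$ is no longer a general point of $C$, so one cannot even guarantee $\ell_P\not\subset\mathrm{Supp}(D)$).

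This case should never have arisen: it is an artifact of passing to a single general member $D$, whose non-klt locus is not $\mathrm{PSL}_2(\mathbb{C})$-invariant. The paper works instead with the invariant pair $\bigl(V_5,\tfrac{1}{n}\mathcal{D}\bigr)$ (equivalently, with $\tfrac{1}{kn}\sum_{i=1}^{k}D_i$ for many general members, via \cite[Theorem~4.8]{Ko97}): its locus of log canonical singularities is closed and $\mathrm{PSL}_2(\mathbb{C})$-invariant, and since $\mathrm{PSL}_2(\mathbb{C})$ is connected, each irreducible component is invariant, hence an orbit closure. As you yourself observe, $V_5$ has no zero-dimensional orbits, so an isolated point cannot occur; the only candidates are $\overline{\Delta}$ (excluded by $\Delta\not\subset\mathrm{Bs}(\mathcal{D})$) and $C$. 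This lands you directly in your first case, with $\mathrm{mult}_C(D)>n$ for a general $D\in\mathcal{D}$, and the tangent-line argument at a general point of $C$ then finishes the proof in one line, exactly as in the paper.
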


\begin{proof}
Suppose that $\mathrm{lct}(X, \mathcal{D})<1/n$. Then there exists
a $\mathrm{PSL}_2(\mathbb{C})$-invariant subvariety $Z\subsetneq
X$, such that
$$
\mathrm{mult}_{Z}\big(D\big)>n,
$$
where $D$ is a general divisor in $\mathcal{D}$. Since
$\Delta\not\subset\mathrm{Bs}(\mathcal{D})$, the subvariety $Z$ is
the curve $C$. Let $P$ be a general point of $C$, and $L$ be the
tangent line to $C$ at $P$. Then $L\not\subset\mathrm{Supp}(D)$.
By Lemma~\ref{lemma:multiplicity-2} one has
$$
2n=D\cdot L\geqslant
\mathrm{mult}_{P}\Big(D\cdot L\Big)>2n,%
$$
which is a contradiction.
\end{proof}

Lemmas~\ref{lemma:fixed-or-mobile}, \ref{lemma:fixed-divisor}
and~\ref{lemma:mobile-system} imply that $\mathrm{lct}(V_5,
\PSL_2(\C))\geqslant  5/6$, and hence $\mathrm{lct}(V_5,
\PSL_2(\C))=\nolinebreak 5/6$.

\section{The Mukai--Umemura threefold}
\label{section:MU}

Let $X$ be a smooth Fano threefold such that
$$
\mathrm{Pic}\big(X\big)=\mathbb{Z}\big[-K_{X}\big],
$$
the equality $-K_{X}^{3}=22$ holds and
$\mathrm{Aut}(X)\cong\mathrm{PSL}(2,\mathbb{C})$. It is well known
that the variety having such properties is unique (see
\cite{MuuM83},~\cite{Pr90umn}).

\begin{proposition}
\label{lemma:V22} The equality $\mathrm{lct}(X)=1/2$ holds.
\end{proposition}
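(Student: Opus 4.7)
The plan is to prove $\mathrm{lct}(X) = 1/2$ by establishing the two inequalities $\mathrm{lct}(X) \le 1/2$ and $\mathrm{lct}(X) \ge 1/2$ separately, combining explicit geometry of the Mukai--Umemura threefold with the log canonical machinery of Section~\ref{section:preliminaries}.

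For the upper bound $\mathrm{lct}(X) \le 1/2$, I would exhibit an explicit effective $\mathbb{Q}$-divisor $D_0 \sim_{\mathbb{Q}} -K_X$ that witnesses it. The variety $X$ carries a $\mathrm{PSL}(2,\mathbb{C})$-orbit stratification of the same shape as that of $V_5$: an open three-dimensional orbit, a two-dimensional orbit whose closure $S \in |-K_X|$ is a $\mathrm{PSL}(2,\mathbb{C})$-invariant surface, and a one-dimensional closed orbit $C$, a rational normal curve of degree $12$ in the anticanonical embedding $X \subset \mathbb{P}^{13}$. I would take $D_0$ to be $S$, or more flexibly a divisor tangent to $X$ to high order along $C$ (for instance an osculating hyperplane section at a point of $C$), and verify in local coordinates adapted to the $\mathrm{PSL}(2,\mathbb{C})$-action that $D_0$ has multiplicity at least $6$ at some point $P \in C$; the inequality $\mathrm{lct}_P(X, D_0) \le \dim X / \mathrm{mult}_P(D_0)$ then gives $\mathrm{lct}(X, D_0) \le 1/2$.

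For the lower bound $\mathrm{lct}(X) \ge 1/2$, I would argue by contradiction. Suppose there is an effective $\mathbb{Q}$-divisor $D \sim_{\mathbb{Q}} -K_X$ and a rational $\lambda < 1/2$ such that $(X, \lambda D)$ is not log canonical. The constraint $\mathrm{Pic}(X) = \mathbb{Z}[-K_X]$ is crucial: writing $D = \sum a_i D_i$ with $D_i \sim b_i(-K_X)$ and $b_i \ge 1$ forces $\sum a_i b_i = 1$ and therefore $a_i \le 1$, so no prime divisor has coefficient $\ge 1/\lambda > 2$ in $\lambda D$, and $\mathrm{LCS}(X, \lambda D)$ contains no divisorial component. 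If $\mathrm{LCS}(X, \lambda D)$ contains an irreducible curve $C'$, then $\mathrm{mult}_{C'}(D) > 2$; restricting $D$ to a general surface in $|-K_X|$ together with an analysis of low-degree rational curves on $X$ (and Lemma~\ref{lemma:multiplicity-2} in the tangent situation) forces $\mathrm{mult}_{C'}(D) \le 2$, a contradiction. Otherwise $\mathrm{LCS}(X, \lambda D)$ is zero-dimensional; Theorem~\ref{theorem:connectedness} reduces it to a single point $P$, and blowing up $P$ and invoking Theorem~\ref{theorem:log adjunction-connectedness-theorem} exactly as in the proof of Theorem~\ref{theorem:V1-lct} will yield the final contradiction.

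The hard part will be ruling out one-dimensional log canonical centers. Unlike the situation in Section~\ref{section:V5}, there is no group-invariance hypothesis on $D$ to exploit, so the candidate curve $C'$ cannot be reduced to the closed orbit $C$ by an orbit argument. All irreducible curves on $X$ of anticanonical degree at most $10$ must be excluded individually, which will require detailed information on the low-degree rational curves of the Mukai--Umemura threefold together with repeated use of Lemma~\ref{lemma:multiplicity-2}.
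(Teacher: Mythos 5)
Your upper bound has a genuine gap. The invariant surface $S=\overline{\Sigma}_2$ (the paper's $R=X\cap\mathbb{P}(U)$) is not a valid witness: along the closed orbit it is locally the product of a cuspidal curve with a disc, so $\lct(X,R)=5/6>1/2$ --- the paper in fact uses this value to \emph{remove} $R$ from the support of a hypothetical bad divisor via Remark~\ref{remark:convexity}. Your fallback, a divisor $D_0\sim_{\mathbb{Q}}-K_X$ of multiplicity $\geqslant 6$ at a point of the closed orbit, is unsubstantiated and essentially hopeless: any $D\sim_{\mathbb{Q}}-K_X$ not containing the (unique) line $\ell$ through a point $O_{\ell}\in\Sigma_1$ satisfies $\mathrm{mult}_{O_{\ell}}(D)\leqslant D\cdot \ell=1$, and an osculating hyperplane section has high contact with $\Sigma_1$ but not high multiplicity as a divisor on the threefold. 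The paper's actual witness is the Furushima surface $\bar D_{L}\sim -K_X$ coming from the double projection from a line $L$: it has $\mathrm{mult}_{L}(\bar D_{L})=3$ and only $\mathrm{mult}_{O_{L}}(\bar D_{L})=4$, and $\lct(X,\bar D_{L})=1/2$ is extracted from its explicit local equation via \cite[Proposition~8.14]{Ko97}, not from a multiplicity estimate.

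The lower bound also does not go through as written. Excluding one-dimensional centres ``individually'' for all irreducible curves of anticanonical degree up to $10$ is precisely the work that would have to be done, and you do not do it; and the zero-dimensional case cannot be finished ``exactly as in the proof of Theorem~\ref{theorem:V1-lct}'', because that argument concludes by using that $G$ has no fixed points on the exceptional $\mathbb{P}^2$ --- here there is no group invariance to exploit. The paper sidesteps both issues simultaneously: after arranging $R\not\subset\mathrm{Supp}(D)$ and choosing a line $C\not\subset\mathrm{Supp}(D)$, it checks that $O_{C}\notin\mathrm{LCS}(X,\lambda D)$ and that $O_{C}$ is an \emph{isolated} point of $\mathrm{LCS}\bigl(X,\tfrac12\bar D_{C}+\lambda D\bigr)$, a set which also contains the nonempty $\mathrm{LCS}(X,\lambda D)$; since $\lambda+\tfrac12<1$, Theorem~\ref{theorem:connectedness} yields a contradiction with no case analysis on the dimension of the centre. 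To repair your argument you would need either to import this auxiliary-divisor trick or to actually carry out the curve-by-curve exclusion and supply a genuinely new argument (e.g.\ via conics through the point) for isolated centres.
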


\begin{proof}
Let $U\subset\mathbb{C}[x,y]$ be a subspace of forms of degree
$12$. Consider $U\cong\mathbb{C}^{13}$ as the affine~part~of
$$
\mathbb{P}\Big(U\oplus \mathbb{C}\Big)\cong\mathbb{P}^{13},
$$
and let us identify $\mathbb{P}(U)$ with the hyperplane at
infinity.

The natural action of $\mathrm{SL}(2,\mathbb{C})$ on
$\mathbb{C}[x,y]$ induces an action on $\mathbb{P}(U\oplus
\mathbb{C})$. Put
$$
\phi=xy\Big(x^{10}-11x^5y^5-y^{10}\Big)\in U
$$
and consider the closure
$\overline{\mathrm{SL}(2,\mathbb{C})\cdot[\phi+1]}\subset\mathbb{P}(U\oplus
\mathbb{C})$. It follows from \cite{MuuM83} that
$$
X\cong\overline{\mathrm{SL}\big(2,\mathbb{C}\big)\cdot\big[\phi+1\big]},%
$$
and the embedding $X\subset\mathbb{P}(U\oplus
\mathbb{C})\cong\mathbb{P}^{13}$ is induced by $|-K_{X}|$.

It is well known (see \cite[Theorem~5.2.13]{IsPr99}) that the
action of $\mathrm{SL}(2,\mathbb{C})$ on $X$ has the following
orbits:
\begin{itemize}
\item the three-dimensional orbit $\Sigma_3=\mathrm{SL}(2,\mathbb{C})\cdot [\phi+1]$;%
\item the two-dimensional orbit $\Sigma_2=\mathrm{SL}(2,\mathbb{C})\cdot [xy^{11}]$;%
\item the one-dimensional orbit $\Sigma_1=\mathrm{SL}(2,\mathbb{C})\cdot [y^{12}]$.%
\end{itemize}

The orbit $\Sigma_{3}$ is open. The orbit
$\Sigma_{1}\cong\mathbb{P}^{1}$ is closed. One has
$\overline{\Sigma}_2=\Sigma_1\cup\Sigma_2$, and
$$
X\cap \mathbb{P}(U)=\Sigma_1\cup \Sigma_2.
$$

Put $R=X\cap \mathbb{P}(U)$. It follows from \cite{MuuM83} that
\begin{itemize}
\item the surface $R$ is swept out by lines on $X\subset\mathbb{P}^{13}$,%
\item the surface $R$ contains all lines on $X\subset\mathbb{P}^{13}$,%
\item for any lines $L_{1}\subset R\supset L_{2}$ such that $L_{1}\ne L_{2}$, one has $L_{1}\cap L_{2}=\varnothing$,%
\item the surface $R$ is singular along the orbit $\Sigma_1\cong\mathbb{P}^1$,%
\item the normalization of the surface $R$ is isomorphic to $\mathbb{P}^1\times \mathbb{P}^1$,%
\item for every point $P\in \Sigma_{1}$, the surface $R$ is
locally isomorphic to
$$
x^{2}=y^{3}\subset\mathbb{C}^{3}\cong\mathrm{Spec}\Big(\mathbb{C}\big[x,y,z\big]\Big),
$$
which implies that $\mathrm{lct}(X,R)=5/6$.
\end{itemize}

The structure of the surface $R$ can be seen as follows. We see
that
$$
\Sigma_2=\Big\{\Big[\big(ax+by\big)\big(cx+dy\big)^{11}\Big]\ \Big\vert\ ad-bc=1\Big\}\subset\mathbb{P}\big(U\big),%
$$
which implies that there is a birational  morphism
$\nu\colon\mathbb{P}^1 \times \mathbb{P}^1 \to R$ that is defined
by
$$
\nu\colon\big[a:b\big]\times\big[c:d\big]\mapsto \Big[\big(ax+by\big)\big(cx+dy\big)^{11}\Big]\in R,%
$$
which is a normalization of the surface $R$.

Let $V_{5}$ be a smooth Fano threefold such that
$$
-K_{V_{5}}\sim 2H
$$
and $H^{3}=5$, where $H$ is a Cartier divisor on $V_{5}$. Then
$|H|$ induces an embedding $X\subset\mathbb{P}^{6}$ (see
Section~\ref{section:V5}).

Let $L\cong\mathbb{P}^{1}$ be a line on $X$. Then
$$
\mathcal{N}_{L\slash
X}\cong\mathcal{O}_{\mathbb{P}^{1}}(-2)\oplus\mathcal{O}_{\mathbb{P}^{1}}(1).
$$

Let $\alpha_{L}\colon U_{L}\to X$ be a blow up of the line $L$,
and let $E_{L}$ be the exceptional divisor of $\alpha_{L}$. Then
it follows from Theorem~4.3.3 in \cite{IsPr99} that there is a
commutative diagram
$$
\xymatrix{
U_{L}\ar@{->}[d]_{\alpha_{L}}\ar@{-->}[rr]^{\rho_{L}}&&W_{L}\ar@{->}[d]^{\beta_{L}}\\%
X\ar@{-->}[rr]_{\psi_{L}}&&V_{5}}
$$
where $\rho_{L}$ is a flop in the exceptional section of
$E\cong\mathbb{F}_{3}$, the morphism $\beta_{L}$ contracts a
surface $D_{L}\subset W_{L}$ to a smooth rational curve of degree
$5$, and $\psi_{L}$ is a double projection from the line $L$.

Let $\bar{D}_{L}\subset X$ be the proper transform of the surface
$D_{L}$. Then
$$
\mathrm{mult}_{L}\big(\bar{D}_{L}\big)=3
$$
and $\bar{D}_{L}\sim -K_{X}$. It follows from \cite{Fur90} that
$X\setminus\bar{D}_{L}\cong\mathbb{C}^{3}$.

It follows from \cite{Fur92} that there is an open subset
$\breve{D}_{L}\subset\bar{D}_{L}$ that is given by
$$
\mu_{0}x^{4}+\Big(\mu_{1}yz+\mu_{2}z^{3}\Big)x^{3}+\Big(\mu_{3}y^{3}+\mu_{4}y^{2}z^{2}+\mu_{5}yz^{4}\Big)x^{2}+\Big(\mu_{6}y^{4}z+\mu_{7}y^{3}z^{3}\Big)x+\mu_{8}y^{6}+\mu_{9}y^{5}z^{2}=0%
$$
in $\mathbb{C}^{3}\cong\mathrm{Spec}(\mathbb{C}[x,y,z])$, where
the point $L\cap \Sigma_{1}\in\breve{D}_{L}$ is given by the
equations $x=y=z=0$, and
$$
\left.%
\aligned
&\mu_{0}=-2^{8}5^{2},\ \mu_{1}=2^{9}3^{3}5,\ \mu_{2}=-2^{6}3^{4}5,\ \mu_{3}=-2^{8}3^{3}7,\ \mu_{4}=-2^{4}3^{4}127,\,\\%
&\mu_{5}=2^{9}3^{5},\ \mu_{6}=2^{2}3^{6}89,\ \mu_{7}=-2^{8}3^{6},\ \mu_{8}=-3^{6}5^{3},\ \mu_{9}=2^{5}3^{7}.\\%
\endaligned\right.%
$$

Put $O_{L}=\Sigma_{1}\cap L$. Then
$\mathrm{mult}_{O_{L}}(\bar{D}_{L})=4$, and it follows from
\cite[Proposition~8.14]{Ko97}  that
$$
\mathrm{LCS}\left(X,\ \frac{1}{2}\bar{D}_{L}\right)=O_{L}
$$
and $\mathrm{lct}(X, \bar{D}_{L})=1/2$. Thus, we see that
$\mathrm{lct}(X)\leqslant 1/2$.

Suppose that $\lct(X)<1/2$. Then there exists an effective
$\mathbb{Q}$-divisor
$$
D\sim_{\mathbb{Q}} -K_{X},
$$
such that the log pair $(X,\lambda D)$ is not log canonical for
some positive rational number $\lambda<1/2$. By
Remark~\ref{remark:convexity}, we may assume that
$R\not\subset\mathrm{Supp}(D)$, because $\mathrm{lct}(X,R)=5/6$.

Let $C$ be a line in $X$ such that $C\not\subset\mathrm{Supp}(D)$.
Then
$$
1=D\cdot C\geqslant\mathrm{mult}_{O_{C}}\big(D\big)\mathrm{mult}_{O_{C}}\big(C\big)=\mathrm{mult}_{O_{C}}\big(D\big),%
$$
which implies that $O_{C}\not\in\mathrm{LCS}(X,\lambda D)$. In
particular, we see that $\Sigma_{1}\not\in\mathrm{LCS}(X,\lambda
D)$.

Let $\Gamma$ be an irreducible curve in $\mathrm{Supp}(D)$ such
that $O_{C}\in\Gamma$. Then
$$
\mathrm{mult}_{\Gamma}\left(\frac{1}{2}\bar{D}_{C}+\lambda D\right)=\frac{\mathrm{mult}_{\Gamma}\big(\bar{D}_{C}\big)}{2}+\lambda\mathrm{mult}_{\Gamma}\big(D\big)\leqslant\frac{\mathrm{mult}_{\Gamma}\big(\bar{D}_{C}\big)}{2}+\lambda\mathrm{mult}_{O_{C}}\big(D\big)<1,%
$$
because $\lambda<1/2$ and $\mathrm{Sing}(\bar{D}_{C})=C$, because
$\bar{D}_{C}\ne R$. Thus, we see that
$$
\Gamma\not\subseteq\mathrm{LCS}\left(X,\ \frac{1}{2}\bar{D}_{C}+\lambda D\right)\supseteq\mathrm{LCS}\Big(X, \lambda D\Big)\cup O_{C},%
$$
which is impossible by Theorem~\ref{theorem:connectedness},
because $O_{C}\not\in\mathrm{LCS}(X,\lambda D)$ and $\lambda<1/2$.
\end{proof}

\begin{remark}\label{remark:Donaldson}
It follows from \cite{Do07} that
$$
\mathrm{lct}\Big(X,\
\mathrm{SO}_3\big(\mathbb{R}\big)\Big)=\frac{5}{6},
$$
which implies, in particular, that $X$ has a K\"ahler--Einstein
metric. This inequality can be obtained by arguing as in the proof
of Theorem~\ref{theorem:V5-lct} (the only difference is that we do
not need to use Lemma~\ref{lemma:multiplicity-2} here).
\end{remark}

\end{document}